\newcommand{\CP}{\textsc{CP}} 
\newcommand{\NP}{\text{\textsc{NP}}}
\newcommand{\bigO}{\mathcal{O}}
\let\And\undefined
\let\Or\undefined
\newcommand{\pe}{\mathrm{pe}}
\newcommand{\ecc}{\mathrm{ecc}}
\newcommand{\rad}{\mathrm{rad}}
\DeclareMathOperator*{\argmax}{arg\,max} % Jan Hlavacek
\algrenewcommand\algorithmicthen{:}
\algrenewcommand\algorithmicdo{:}
\algnewcommand\And{\textbf{ and }}
\algnewcommand\Or{\textbf{ or }}
\algnewcommand\New{\textbf{ new }}
\algnewcommand\To{\textbf{ to }}
\algnewcommand\DownTo{\textbf{ down to }}
\algnewcommand\Break{\textbf{break}}
\algnewcommand\Null{\textbf{null}}
\algnewcommand\True{\textbf{true}}
\newtheorem{theorem}{Theorem}
\newtheorem{lemma}{Lemma}
\newtheorem{proposition}{Proposition}
\newtheorem{question}{Question}
\newtheorem{corollary}{Corollary}
\newtheorem{claim}{Claim}
\newtheorem{property}{Property}
\newproof{proof}{Proof}
\begin{document}
	
	\title{Path eccentricity of graphs} 
	\author[1]{Renzo G\'omez\corref{cor1}\fnref{fn1}}
	\ead{rgomez@ic.unicamp.br}
	\author[2]{Juan Guti\'errez\fnref{fn2}}
	\ead{jgutierreza@utec.edu.pe}

	\cortext[cor1]{Corresponding author}
	\fntext[fn1]{This research has been partially supported by FAPESP - S\~ao Paulo Research Foundation (Proc.~2019/14471-1).}
	
	\fntext[fn2]{This research has been partially supported by Movilizaciones para Investigación AmSud, PLANarity and distance
IN Graph theory. E070-2021-01-Nro.6997.}

	\affiliation[1]{organization={Institute of Computing, University of Campinas},
					city={Campinas}, country={Brazil}}
	\affiliation[2]{organization={Departamento de Ciencia de la Computaci\'on, Universidad de Ingenier\'ia y Tecnolog\'ia (UTEC)},
					city={Lima}, country={Peru}}

\begin{abstract}
Let $G$ be a connected graph. The eccentricity of a path $P$, denoted 
by $\ecc_G(P)$, is the maximum distance from $P$ to any vertex in $G$. 
In the \textsc{Central path} ({\CP}) problem our aim is to find a path 
of minimum eccentricity. This problem was introduced by Cockayne et al., 
in 1981, in the study of different centrality measures on graphs. They 
showed that {\CP} can be solved in linear time in trees, but it is known 
to be $\NP$-hard in many classes of graphs such as chordal bipartite graphs, 
planar 3-connected graphs, split graphs, etc.

We investigate the path eccentricity of a connected graph~$G$ as a parameter. 
Let $\pe(G)$ denote the value of $\ecc_G(P)$ for a central path $P$ 
of~$G$. We obtain tight upper bounds for $\pe(G)$ in some graph 
classes. We show that $\pe(G) \leq 1$ on biconvex graphs and 
that $\pe(G) \leq 2$ on bipartite convex graphs. Moreover, we 
design algorithms that find such a path in linear time. 
On the other hand, by investigating the longest paths of a graph, 
we obtain tight upper bounds for $\pe(G)$ on general graphs 
and $k$-connected graphs. 

Finally, we study the relation between a central path and a longest 
path in a graph. We show that on trees, and bipartite permutation 
graphs, a longest path is also a central path. Furthermore, for superclasses 
of these graphs, we exhibit counterexamples for this property.
\end{abstract}
\begin{keyword}
  central path \sep path eccentricity \sep biconvex graph \sep tree \sep cactus \sep $k$-connected graph \sep longest path.
\end{keyword}

\maketitle

%\linenumbers

\section{Introduction and Preliminaries}\label{sec:intro}

For the problem considered here, the input graph is always 
connected (even if it is not stated explicitly). The \textit{length} of 
a path is its number of edges. The \textit{distance} 
between two vertices $u$ and $v$ in a graph $G$, denoted by~$d_G(u, v)$, 
is the minimum length of a path between them. Moreover, the distance 
between a vertex $u$ and a set $S \subseteq V(G)$ 
is~$d_G(u, S) = \min\{ d_G(u, v) : v \in S \}$. 

In the single facility location problem, given a network, one seeks a best 
site to place a facility in order to serve the other sites of the network. 
In some applications, we are interested in finding a location that 
minimizes the maximum distance to the sites. A concept in Graph Theory, 
related to this measure, is the eccentricity. Let $G$ be a graph. 
The \textit{eccentricity} of a vertex $u$, in $G$, 
is $\max\{ d_G(u, v) : v \in V(G) \}$. A vertex of minimum eccentricity 
is called a \textit{center} of $G$. Moreover, the \textit{radius} of $G$, 
denoted by $\rad(G)$, is the eccentricity of a center. 
%Clearly, 
%we can find a center of a graph in polynomial time. From a 
%theoretical point-of-view, another line of research is to 
%consider the eccentricity as a graph parameter. Let $n$ denote 
%the order of a graph $G$. In 1987, Erd\H{o}s et al.~\cite{ErdosPPT89} 
%showed that the radius of $G$ is at most $(3n - 5)/(2\delta(G) + 1) + 5$. If $G$ 
%is $3$-connected, Harant~\cite{Harant93} showed an upper bound of $n/4 + 8$ 
%for the radius of $G$. 

A classical generalization for the single facility location problem 
is the $k$-center problem. In this case, we are interested in finding a 
set $S$ of $k$ sites, inside a network $G$, that 
minimizes $\max\{ d_G(u, S) : u \in V(G) \}$. Observe that the places 
we choose, for those facilities, do not obey any particular structure. 
When the facilities represent bus stops or train stations, a natural 
restriction to impose would be the existence of a route (link) between 
a bus stop and the next one. Thus, the facilities induce a path in the network.
This problem is called the {\scshape Central path} 
(\CP) problem. More formally, given a graph~$G$, we want to find a path $P$  
that minimizes
\[
	\ecc_G(P) := \max\{ d_G(u, V(P)) : u \in V(G) \}.
\]
A path that attains this minimum is called a \textit{central path} 
of $G$. Cockayne et al.~\cite{CockayneHH81} investigated the minimal (regarding 
vertex-set inclusion) central paths of a graph, and designed a linear-time 
algorithm for finding such a path on trees. Slater~\cite{Slater82} studied central paths of minimum 
length, and also showed a linear-time algorithm on trees. Observe that a 
Hamiltonian path is a central path of zero eccentricity. Thus, {\CP} 
is $\NP$-hard on the classes of graphs for which the Hamiltonian path 
problem is $\NP$-complete. This is the case for cubic planar $3$-connected 
graphs~\cite{GareyJT76}, split graphs, chordal bipartite 
graphs~\cite{Muller96}, etc.

%More recently, the {\scshape Minimum Eccentricity Shortest Path} 
%({\scshape MESP}) problem~\cite{DraganL17} has also been studied. 
%In this variant of the {\CP} problem, we are interested 
%in a shortest induced path of minimum eccentricity. 
%Furthermore, Birmel\'{e} et al.~\cite{BirmeleMP16} designed 
%a linear $3$-approximation algorithm for {\scshape MESP}.

%Our objective is to study of the
A parameter that is naturally associated with central paths 
is the \textit{path eccentricity} of a graph $G$, denoted by $\pe(G)$, 
and defined as 
\[
	\pe(G) := \min\{ ecc_G(P): P \text{ is a path in } G\}.
\]
We observe that, in a graph $G$, a path $P$ such that $\ecc_G(P) \leq \ell$ is 
also called an \textit{$\ell$-dominating path} in the literature. Obviously, $G$ has a 
$\ell$-dominating path if and only if 
$\pe(G) \leq \ell$.
%Now, we describe the organization of this text. 
%First, in Section~\ref{sec:prelim} we present the main concepts and 
%definitions that will be used throughout the text.
%% Furthermore, 
%we present results related to {\CP} and the parameter $\pe(G)$ that 
%%exists in the literature, and derive additional implications for the 
%%{\CP} problem.

%In Section~\ref{sec:bcg}, we study the central paths 
We start by studying the path eccentricity  
of convex and biconvex graphs in Section~\ref{sec:bcg}.
Corneil 
et. al.~\cite{CorneilOL97} showed that if $G$ is an \textit{asteroidal triple-free} (AT-free)
graph, then $\pe(G) \leq 1$. 
This class of graphs includes
interval graphs, permutation graphs, among others.
Furthermore, they showed a linear-time 
algorithm to find such a path. Therefore, in this class of graphs, {\CP} 
has the same computational complexity as the \textsc{Hamiltonian path} problem. 
The latter problem is still open on AT-free graphs, however it can be 
solved in polynomial time in some of its subclasses. Keil~\cite{Keil85} 
showed a linear-time algorithm for the Hamiltonian path problem on interval 
graphs, and Spinrad et.\ al.~\cite{SpinradBS87} showed and analogous 
result for bipartite permutation graphs. Therefore, 
the \textsc{Central path} problem can be solved in linear-time 
on interval graphs and bipartite permutation graphs.  
%we have the 
%following result. 

%\begin{corollary} \label{cor:classes}
%	The \textsc{Central path} problem can be solved in linear-time 
%	on interval graphs and bipartite permutation graphs.  
%\end{corollary}

We extend these results, by studying superclasses of bipartite 
permutation graphs, such as convex and biconvex graphs, 
obtaining the following results.

\begin{itemize}
\item We can find a 2-dominating path in a convex graph in linear time (Theorem~\ref{thm:convex-alg}).
This implies that if $G$ is a convex graph, then $\pe(G) \leq 2$ (Corollary~\ref{cor:convexbound}). 
Moreover, this bound is tight.
\item We can find a dominating path in a biconvex graph in linear time (Theorem~\ref{thm:dompath-biconvex}). 
This implies that if $G$ is a biconvex graph, then $\pe(G) \leq 1$ (Corollary~\ref{cor:biconvexbound}). 
Moreover, this bound is tight.
\end{itemize}

In Section~\ref{sec:kconn}, we investigate  
upper bounds for $\pe(G)$ on general graphs and on $k$-connected graphs.
%the path centers 
%of $k$-connected graphs.
Since a single vertex can be regarded as a path of length zero, 
any upper bound for the radius of a graph is also valid 
for $\pe(G)$. 
%From a 
%theoretical point-of-view, another line of research is to 
%consider the eccentricity as a graph parameter.
Let $n$ denote 
the order of a graph $G$. In 1987, Erd\H{o}s et al.~\cite{ErdosPPT89} 
showed that the radius of $G$ is at most $(3n - 5)/(2\delta(G) + 1) + 5$. If $G$ 
is $3$-connected, Harant~\cite{Harant93} showed an upper bound of $n/4 + 8$ 
for the radius of $G$. 
Hence these are valid bounds for $\pe(G)$ for connected and 3-connected graphs.
We improve these results by proving the following.

\begin{itemize}
\item For every connected graph $G$ on $n$ vertices, we have $\pe(G) \leq \frac{n-1}{3}$. Moreover, this bound is tight (Theorem \ref{th:k=1}).
\item For every $k$-connected graph $G$ on $n$ vertices, with $k\geq 2$, we have
$\pe(G) \leq \frac{n+k}{3k+2}$. Moreover, this bound is tight (Theorem \ref{theo:k-conn-2}).
\end{itemize}

The previous tight upper bounds for $\pe(G)$
were obtaining by studying the eccentricity of longest paths. 
So, a natural question is whether a longest path is always a central path.
We investigate the relationship between central paths and longest 
paths in Section~\ref{sec:longest}. We show that

\begin{itemize}
\item In trees, $4$-connected planar graphs,
proper interval graphs, and
bipartite permutation graphs, every longest path 
is a central path (Corollaries \ref{cor:longest-tree}, \ref{cor:longest-properinterval}, and \ref{cor:longest-bpg}).
\item On the other hand, on cacti, $2$-connected planar graphs, interval graphs and convex graphs, 
it is not always the case that every longest path is a central path
(Figures \ref{fig:planar}, \ref{fig:counterInt}, and \ref{fig:cx})
.
\end{itemize}

%Furthermore, we show that these results are, 
%in some sense, sharp. For this, in each case, we show a graph class 
%that contains it, and a familiy of graphs where the difference 
%between $\pe(G)$ and the eccentricity of every longest path is 
%arbitrarily large. 
Finally, we discuss some open problems and 
give concluding remarks in Section~\ref{sec:remarks}.

To conclude this section, we give a simple observation, 
used throughout the text, to derive families of graphs 
that exhibit a tight upper bound for $\pe(G)$. 
Let $G$ be a graph. Let us denote by $\mathcal{C}(G)$ 
the collection of components in $G$. 

\begin{proposition} \label{prop:certificate}
	Let $G$ be a connected graph. Let $S \subseteq V(G)$ such 
	that $|\mathcal{C}(G - S)| \geq |S| + 2$. Then,  
	\[
		\pe(G) \geq \min\{ \ecc_G( S, C ) : C \in \mathcal{C}(G - S) \},
	\]
	where $\ecc_G(S, C) := \max\{ d_G(u, S) : u \in V(C) \}$.
\end{proposition}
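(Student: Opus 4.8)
The plan is to bound the eccentricity of an \emph{arbitrary} path $P$ from below by $\min\{ \ecc_G(S, C) : C \in \mathcal{C}(G - S) \}$; since $\pe(G)$ is the minimum of $\ecc_G(P)$ over all paths, this immediately yields the claimed inequality. So fix a path $P$ and write $m := \min\{ \ecc_G(S, C) : C \in \mathcal{C}(G - S)\}$. My goal is to exhibit a vertex $u$ with $d_G(u, V(P)) \geq m$, which forces $\ecc_G(P) \geq m$.

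First I would argue, by a counting argument, that $P$ fails to meet at least one component of $G - S$. The vertices of $V(P) \cap S$ cut $P$ into at most $|V(P) \cap S| + 1 \leq |S| + 1$ subpaths, each of which is a connected subgraph of $G - S$ and hence lies inside a single component of $G - S$. Consequently $P$ intersects at most $|S| + 1$ components. Since by hypothesis $|\mathcal{C}(G - S)| \geq |S| + 2$, there is a component $C^\ast \in \mathcal{C}(G - S)$ with $V(C^\ast) \cap V(P) = \emptyset$.

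Next I would pick a vertex $u \in V(C^\ast)$ attaining $d_G(u, S) = \ecc_G(S, C^\ast)$ and show that $d_G(u, V(P)) \geq d_G(u, S)$. Indeed, any shortest path from $u$ to $V(P)$ starts at $u \in C^\ast$ and ends at some $w \in V(P)$ with $w \notin C^\ast$ (as $V(P)$ is disjoint from $C^\ast$), so it must leave $C^\ast$. Because every edge with one endpoint in $C^\ast$ has its other endpoint either in $C^\ast$ or in $S$, the first vertex of this path outside $C^\ast$ belongs to $S$. Hence the path reaches $S$ no later than it reaches its endpoint $w$, giving $d_G(u, V(P)) \geq d_G(u, S) = \ecc_G(S, C^\ast) \geq m$. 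Therefore $\ecc_G(P) \geq m$, and since $P$ was arbitrary, $\pe(G) \geq m$.

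The routine distance inequality in the last step is essentially forced by the separator structure, so I expect the main (and only slightly subtle) point to be the counting step: one must phrase precisely why the at most $|S|$ vertices of $S$ lying on $P$ split it into subpaths meeting at most $|S| + 1$ components, so that the surplus of at least $|S| + 2$ in the hypothesis guarantees an untouched component $C^\ast$.
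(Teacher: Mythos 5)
Your proposal is correct and follows essentially the same route as the paper's proof: the same counting argument showing $P$ meets at most $|S|+1$ components of $G-S$ (hence misses some $C^\ast$), followed by the observation that any path from $C^\ast$ to $V(P)$ must pass through $S$, giving $d_G(u,V(P)) \geq d_G(u,S)$. The only cosmetic difference is that you bound $\ecc_G(P)$ for an arbitrary path while the paper fixes a central path from the start; the content is identical.
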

\begin{proof}
	Let $P$ be a central path of $G$. Note that $P - S$ has at most $|S| + 1$ 
	components. As $|\mathcal{C}(G - S)| \geq |S| + 2$, 
	there exists a component $C^* \in \mathcal{C}(G - S)$ such 
	that $V(P) \cap V(C^*) = \emptyset$. Thus, any path 
	between~$V(P)$ and~$V(C^*)$ contains a vertex in $S$. 
	This implies that 
	\[
		\pe(G) = \ecc_G(P) \geq d_G( u, V(P) ) \geq d_G(u, S) + d_G(S, V(P)) \geq d_G(u, S), 
	\]
	for any vertex $u \in V(C^*)$. Therefore,~$\pe(G) \geq \ecc(S, C^*)$.
	\qed
\end{proof}

%\section{Preliminaries and related results} \label{sec:prelim}

%\input{prelim.tex}

%\section{Interval graphs} \label{sec:interval}

%\input{interval.tex}

\section{Path eccentricity of convex graphs and subclasses} \label{sec:bcg}

As mentioned in the previous section, bipartite permutation graphs have bounded 
path eccentricity. In this section, we seek to extend this result studying classes 
of graphs, such as convex and biconvex graphs, that properly contain the bipartite 
permutation graphs. Moreover, we show that these classes also have bounded path 
eccentricity. 

\subsection{Convex graphs}

Let $G = (X \cup Y, E)$ be a bipartite graph. 
We say that $G$ is \textit{$X$-convex} if there 
is an ordering of the vertices in~$X$, such that $N_G(y)$ 
consists of consecutive vertices in 
that ordering, for each $y \in Y$. We call such 
ordering a \textit{convex ordering} of $X$. 
If $G$ is an $X$-convex graph, we abbreviate this fact saying 
that $G$ is convex. In what follows, $G$ denotes 
a convex graph, and we consider 
that $\{ x_1, x_2, \ldots, x_n \}$ is a convex ordering 
of~$X$. Moreover, we say that $x_i < x_j$ if $i < j$; and 
for each vertex~$y \in Y$, we define 
\[
	\begin{array}{rcl}
		\ell(y) & := & \min\{ x : x \in N_G(y) \}, \text{ and} \\
		r(y) & := & \max\{ x : x \in N_G(y) \}.  
	\end{array}
\]

Algorithm~\ref{alg:2dom-path-convex} finds a $2$-dominating 
path, say $P$, of $G$ greedily. Starting with $P = \langle a_1 \rangle$, 
where $a_1 = x_1$, we extend $P$ by appending a vertex $y$, in $N_G(a_1)$, 
that maximizes $r(y)$. After that, we also append $a_2 = r(y)$ to $P$ and 
repeat this procedure until the last vertex in the path is $x_n$.

\begin{algorithm}
  \caption{2Dom-Path-Convex$(G)$}
  \label{alg:2dom-path-convex}
  Input: A connected convex graph $G = (X \cup Y, E)$\\
  Output: A $2$-dominating path of $G$
  
\begin{algorithmic}[1]
\State Find a convex ordering of $X$, say $\{ x_1, x_2, \ldots, x_n \}$ \label{lst:ord1}
\State $a_1 \gets x_1$
\State $k \gets 1$
\While{$a_k \neq x_n$} \label{lst:loop-conv} 
	\State $b_{k} \gets \argmax\{ r(y) : y \in N_G(a_k) \}$
	\State $a_{k + 1} \gets r( b_k )$ \label{lst:max}
	\State $k \gets k + 1$
\EndWhile \label{lst:loop-end-conv}
\State \Return $\langle a_1, b_1, \ldots, b_{k - 1}, a_k \rangle$
\end{algorithmic}
\end{algorithm}

\begin{theorem} \label{thm:convex-alg}
	Algorithm~\ref{alg:2dom-path-convex} finds a $2$-dominating 
	path in a convex graph $G$ in $\bigO(|V(G)| + |E(G)|)$ time.
\end{theorem}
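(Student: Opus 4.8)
The plan is to split the statement into a correctness part---that the returned sequence is a genuine path and is $2$-dominating---and a separate running-time analysis. First I would check that the algorithm is well defined and halts. The governing invariant is that the sequence $a_1, a_2, \ldots$ is strictly increasing in the convex ordering. Since $b_k \in N_G(a_k)$ we have $a_k \in N_G(b_k)$, hence $\ell(b_k) \le a_k \le r(b_k) = a_{k+1}$, giving $a_{k+1} \ge a_k$; to upgrade this to a strict inequality (and thus guarantee termination in at most $n$ iterations) I would show that whenever $a_k \ne x_n$ there is some $y \in N_G(a_k)$ with $r(y) > a_k$. If not, every neighbor of $a_k = x_i$ would have rightmost endpoint exactly $x_i$, and then the $X$-vertices up to $x_i$ together with the $Y$-vertices whose interval ends at or before $x_i$ would be separated from $x_{i+1}$ by an empty cut, contradicting connectivity. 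Each appended pair $(b_k, a_{k+1})$ is an edge by construction, so the output is a valid $x_1$--$x_n$ path.

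The heart of the correctness is a covering argument on the convex ordering. Because $G$ is convex, each $b_k$ is adjacent to every $X$-vertex in the block $[\ell(b_k), r(b_k)]$. I would show these blocks overlap consecutively: $\ell(b_1) = x_1$ (as $x_1 \in N_G(b_1)$, and $x_1$ is the minimum of the ordering), and for every $k$ the choice $a_{k+1} = r(b_k)$ together with $a_{k+1} \in N_G(b_{k+1})$ forces $\ell(b_{k+1}) \le a_{k+1} \le r(b_{k+1})$, so consecutive intervals share the point $a_{k+1}$. Hence their union is exactly $[x_1, a_k] = [x_1, x_n]$, so every vertex of $X$ lies in some $[\ell(b_j), r(b_j)]$ and is therefore adjacent to $b_j$ (or lies on $P$), placing it at distance at most $1$ from $P$. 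Any $y \in Y$ has a neighbor in $X$, which is within distance $1$ of $P$, so $y$ is within distance $2$; thus $\ecc_G(P) \le 2$ and $P$ is $2$-dominating.

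For the complexity I would first invoke a linear-time convex-ordering routine (line~\ref{lst:ord1}), e.g.\ via PQ-trees, and then precompute $\ell(y)$ and $r(y)$ for all $y \in Y$ in a single scan of the adjacency lists, costing $\bigO(|V(G)| + |E(G)|)$. The only nontrivial step inside the loop is evaluating $\argmax\{ r(y) : y \in N_G(a_k) \}$, which I would implement as a scan of $N_G(a_k)$ in $\bigO(\deg(a_k))$ time. Since the $a_k$ are pairwise distinct (by strict monotonicity), these scans range over disjoint neighborhoods, so their total cost is $\sum_k \deg(a_k) \le \sum_{x \in X} \deg(x) = \bigO(|E(G)|)$. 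Assembling the path is $\bigO(|V(G)|)$, which yields the claimed bound.

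I expect the main obstacle to be the termination/stalling argument: showing that the greedy never gets stuck at some $a_k < x_n$ is precisely the place where connectivity is essential, and it is cleanest to phrase it through the empty cut described above rather than by explicitly tracing a path to $x_{i+1}$. The interval-covering invariant of the second paragraph is the conceptual crux, but once the overlap $\ell(b_{k+1}) \le r(b_k) = a_{k+1}$ is isolated, the rest is routine.
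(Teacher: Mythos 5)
Your proposal is correct and follows essentially the same route as the paper: termination via a connectivity/cut argument showing some neighbor of $a_k$ reaches past $a_k$, $2$-domination via the consecutive overlapping intervals $[\ell(b_i), r(b_i)]$ covering all of $X$ (with each $y \in Y$ then at distance two through any $X$-neighbor), and a linear-time bound from the convex-ordering routine plus disjoint neighborhood scans. The only differences are cosmetic (your empty-cut phrasing of the termination step and the slightly more detailed accounting of the $\argmax$ cost).
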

\begin{proof}
	First, we show that the loop of 
	lines~\ref{lst:loop-conv}-\ref{lst:loop-end-conv} 
	terminates. For this, we show that if $a_k \neq x_n$ (at any iteration) 
	then 
	\[
		a_k < \max\{ r(y) : y \in N_G(a_k) \}. 
	\] 
	That is, at line~\ref{lst:max}, we have $a_{k} < a_{k + 1}$.
	Let $X_1 = \{ x : x \in X,\, x \leq a_k \}$ and $Y_1 = N_G(X_1)$.
	Let $X_2$ (resp. $Y_2$) be $X \setminus X_1$ (resp. $Y \setminus Y_1$). 
	Let~$G_1$ (resp. $G_2$) be the graph induced by $X_1 \cup Y_1$ 
	(resp. $X_2 \cup Y_2$). Since $G$ is connected and $G_2$ contains $x_n$, 
	then there must exist an edge linking $G_1$ to $G_2$. Now, 
	by the definition of~$Y_1$, there is no edge between $X_1$ and $Y_2$. 
	Thus, the edge linking $G_1$ to $G_2$ must be between a 
	vertex $y^* \in Y_1$ and vertex in $X_2$. The convexity of $X$ 
	implies that $\ell(y^*) \leq a_k < r(y^*)$, and thus~$y^* \in N_G(a_k)$. 
	Therefore, we have $a_{k} < r(y^*) \leq a_{k + 1}$.
	
	Let $k^*$ be the value of $k$ at the end of the algorithm. 
	By the previous arguments $a_{k^*} = x_n$. Now, we show 
	that $P = \langle a_1, b_1, \ldots, b_{k^* - 1}, a_{k^*} \rangle$ is 
	a $2$-dominating path of $G$. Since $b_i$ is adjacent to $a_i$ 
	and $a_{i + 1}$, convexity implies that 
	\[
		\displaystyle \bigcup_{i = 1}^{k^* - 1}{N_G(b_i)} = X.
	\]
	Thus, it suffices to show that every vertex in $Y$ is at distance at 
	most two from a vertex in $P$. Let $y \in Y$. Since $a_1 = x_1$ 
	and $a_{k^*} = x_n$, then $a_i \leq \ell(y) \leq a_{i + 1}$, for 
	some $1 \leq i < k^*$. Moreover, as $b_i$ is adjacent to $a_i$ 
	and $a_{i+1}$, the vertex $b_i$ must be adjacent to $\ell(y)$ by convexity. 
	Thus, $y$ is at distance at most two from $P$. Regarding the time complexity 
	of the algorithm, the ordering at line~\ref{lst:ord1} is
	computed in linear time~\cite{BoothL76}. 
	Finally, the loop of lines~\ref{lst:loop-conv}-\ref{lst:loop-end-conv} 
	runs in $\bigO(|V(G)|+|E(G)|)$ time. Therefore, 
	Algorithm~\ref{alg:2dom-path-convex} runs in~$\bigO(|V(G)|+|E(G)|)$ time.
	\qed
\end{proof}

The previous theorem implies the following upper bound on $\pe(G)$.

\begin{corollary} \label{cor:convexbound}	
	If $G$ is a convex graph, then $\pe(G) \leq 2$.
\end{corollary}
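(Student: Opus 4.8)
The plan is to read this corollary directly off Theorem~\ref{thm:convex-alg} together with the definition of path eccentricity, so the argument amounts to a single line of definitional unwinding. First I would invoke the standing convention, stated at the start of the paper, that every graph under consideration is connected; this guarantees that the hypothesis of Theorem~\ref{thm:convex-alg} (a \emph{connected} convex graph) is met on input $G$, so Algorithm~\ref{alg:2dom-path-convex} indeed returns an output path $P$.

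Next I would recall the terminology introduced in Section~\ref{sec:intro}: a path $P$ is an $\ell$-dominating path precisely when $\ecc_G(P) \leq \ell$. Hence the $2$-dominating path $P$ produced by Theorem~\ref{thm:convex-alg} satisfies $\ecc_G(P) \leq 2$. Since $\pe(G) = \min\{ \ecc_G(P') : P' \text{ is a path in } G\}$ is a minimum over all paths of $G$, the existence of this particular path $P$ immediately yields $\pe(G) \leq \ecc_G(P) \leq 2$, which is the claim.

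There is no genuine obstacle here: the entire quantitative content of the bound was already established in Theorem~\ref{thm:convex-alg}, and the corollary merely rephrases ``$G$ admits a $2$-dominating path'' as ``$\pe(G) \leq 2$.'' The only point worth verifying is that the convexity hypothesis transfers unchanged, which is clear because the theorem and the corollary refer to the same class of graphs; and that connectivity, needed for the algorithm to terminate with a well-defined path, is supplied by the paper's blanket assumption.
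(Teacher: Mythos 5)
Your proposal is correct and matches the paper, which derives the corollary as an immediate consequence of Theorem~\ref{thm:convex-alg}: the $2$-dominating path returned by Algorithm~\ref{alg:2dom-path-convex} has eccentricity at most $2$, so the minimum over all paths is at most $2$. The points you verify (the blanket connectivity assumption and the equivalence between ``$\ell$-dominating path'' and ``$\ecc_G(P)\leq\ell$'') are exactly what the paper relies on implicitly.
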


Now, we show that this bound is tight. Furthermore, we show 
that for every positive integer $k$, there exists a $k$-connected convex 
graph that attains this bound. Let $k$ be a positive integer, 
and let $\ell := k(k + 2)$. Let $K_{k, \ell}$ be the complete bipartite 
graph with one side of size $k$ and the other side of size $\ell$. 
Since $\ell > k$, then $K_{k, \ell}$ is a $k$-connected graph. 
In what follows, we consider that $K_{k, \ell}$ is a $(X, Y)$-bipartite 
graph such that $|X|= \ell$, $|Y| = k$ and 
\[
	X = \{ x_1, x_2, \ldots, x_\ell \}.
\]
We observe that $K_{k, \ell}$ is $X$-convex. 
Let $G$ be the graph defined as follows:
\[
	\begin{array}{rcl}
		V(G) & = & V(K_{k, \ell}) \cup \{ z_i : 1 \leq i \leq k + 2\}, \\
		E(G) & = & E(K_{k, \ell}) \cup \{ z_ix_j : 1 \leq i \leq k + 2, \, (i - 1) \cdot k + 1 \leq j \leq i \cdot k \}.
	\end{array}
\]

Since each vertex $z_i$ is adjacent to $k$ vertices in $X$, and $K_{k, \ell}$ is 
bipartite $k$-connected, we have that $G$ is also bipartite $k$-connected. 
Furthermore, as $N_G(z_i) = \{ x_j \in X : (i - 1) \cdot k + 1 \leq j \leq i \cdot k \}$, 
$G$ is also $X$-convex. Finally, if we consider $S = Y$, then each component 
$C$ in $\mathcal{C}(G - S)$ is a star such that its center is the unique vertex, in $C$, 
not adjacent to $S$. Thus, $\ecc_G(S, C) = 2$, for each $C \in \mathcal{C}(G - S)$. 
By Proposition~\ref{prop:certificate} and Corollary~\ref{cor:convexbound}, we 
have that $\pe(G) = 2$. We show an example of this construction for $k = 2$ in 
Figure~\ref{fig:conv-bound}.

\begin{figure}
	\centering
	\includegraphics[scale=.7]{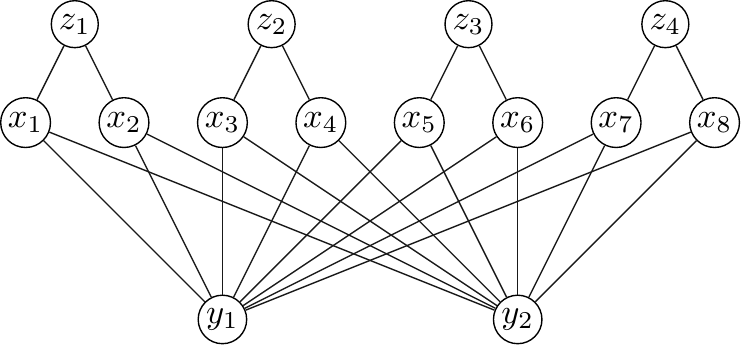}
	\caption{A convex $2$-connected graph $G$ such that $\pe(G) = 2$.}
	\label{fig:conv-bound}
\end{figure}

\subsection{Biconvex graphs}

In this section we study the parameter $\pe(G)$ on biconvex graphs 
(a subclass of convex graphs), and improve the result obtained in 
the previous section. We show that $\pe(G) \leq 1$ for every biconvex 
graph $G$. We say that a $(X, Y)$-bipartite graph is \textit{biconvex} if 
it is both, $X$-convex and $Y$-convex. Moreover, we say that $V(G) = X \cup Y$ 
admits a \textit{biconvex ordering}. A subclass of biconvex graphs 
are the bipartite permutation graphs. A result of Corneil et.\ 
al.~\cite{CorneilOL97} implies that bipartite permutation graphs 
admit a dominating path. For ease of explanation, we will show an 
algorithm that finds such a path on bipartite permutation graphs. 
After that, we modify this algorithm in order to find a dominating path in 
any biconvex graph $G$ (implying that $\pe(G) \leq 1$). 

Let $G = ( X \cup Y, E )$ be a connected bipartite permutation 
graph. Spinrad et al.~\cite{SpinradBS87} (see Theorem~1) showed that $V(G)$ admits a 
biconvex ordering, say $X = \{ x_1, \ldots, x_n \}$ 
and~$Y = \{y_1, \ldots, y_m\}$, that satisfy one additional property. 
Let $u$ and $v$ be two vertices in the same part of $G$. If $u$ comes 
before $v$ in the previous ordering, we write~$u < v$ to indicate this 
fact. Consider $x_i, x_k \in X$ and $y_a, y_c \in Y$ such that $x_i \leq x_k$ 
and $y_a \leq y_c$. Then, we have the following: 
\begin{enumerate}[$(\star)$]
	\item if $x_iy_c$ and $x_ky_a$ belong to $E$, then $x_iy_a$ and $x_ky_c$ also belong to $E$. \label{prop:cross}
\end{enumerate}

An ordering of $V(G)$ that satisfies~$(\star)$ is called 
\textit{strong}. Yu \& Chen~\cite{YuC95} (see Lemma~7) noted that 
every strong ordering of $V(G)$ is also biconvex.
The main tool that we use to construct a dominating path in $G$ is 
a decomposition of $V(G)$ showed by Uehara \& Valiente~\cite{UeharaV07}. 
They showed that, given a strong ordering of $V(G)$, we can 
decompose $G$ into subgraphs $(K_1 \cup J_1), \ldots, (K_k \cup J_k)$ 
in the following way. First, let $K_1$ be the graph induced 
by $N(x_1) \cup N(y_1)$. Let $J_1$ be the set of isolated 
vertices (maybe empty) in $G - K_1$. Now, consider 
that~$G = G - (K_1 \cup J_1)$, and repeat this process until 
the graph becomes empty. They called this decomposition 
a \textit{complete bipartite decomposition} of $G$. Moreover, 
they showed that it can be obtained in~$\bigO(|V(G)|)$. 
Let~$\ell_X(K_i)$ and $r_X(K_i)$ be the vertices in $V(K_i) \cap X$ 
such that
\[
	\begin{array}{rl}
		\ell_X(K_i) \leq x, \text{ for each } x \in V(K_i) \cap X, \\
		r_X(K_i) \geq x, \text{ for each } x \in V(K_i) \cap X. \\
	\end{array}
\] 
In a similar way, we define $\ell_Y(K_i)$ and $r_Y(K_i)$, for $i = 1, \ldots, k$.
Observe that, 
\[
	V(K_i) = N_G( \ell_X(K_i) ) \cup N_G( \ell_Y(K_i) ), \, \text{for } i = 1, \ldots, k.
\]
In what follows, we say that two sets of vertices are \textit{adjacent} if there exists 
an edge linking them. Uehara \& Valiente~\cite{UeharaV07} showed that 
this decomposition satisfies the following properties:

\begin{enumerate}[$a)$]
	\item $K_i$ is a nontrivial (with at least one edge) complete bipartite graph. Moreover, 
	$V(K_i)$ is adjacent to~$V(K_{i + 1})$, but not to $V(K_{\ell})$, for $\ell > i + 1$. \label{prop:comp}
	\item For each $i \in \{1, \ldots, k\}$, either $J_i \subseteq X$ or $J_i \subseteq Y$. \label{prop:class}
	\item For each $i \in \{1, \ldots, k\}$, if $J_i \subseteq X$ (resp. $J_i \subseteq Y$), then $J_i$ is a continuous 
	sequence of vertices in the ordering of~$X$ (resp. $Y$). Furthermore, $r_X(K_i)\leq a \leq \ell_X(K_{i+1})$ 
	(resp.~$r_Y(K_i)\leq a \leq \ell_Y(K_{i+1})$), for each~$a \in J_i$. \label{prop:Ji}
	\item $N_G(J_i) \subseteq V(K_i)$, for $1 \leq i \leq k$. Furthermore, 
	if $a, a' \in J_i$ and $a \leq a'$, then $N_G(a') \subseteq N_G(a)$. \label{prop:inK}
\end{enumerate}

Let $e$ be an edge with one end in $K_i$ and the other in $K_{i + 1}$. 
We say that $e$ is an \textit{$XY$-edge} if the end of $e$, in $K_i$, belongs to $X$. 
In an analogous way, we define a $YX$-edge. The idea of our algorithm is to 
construct a dominating path of $G$ incrementally. Given a dominating path $P$
of the graph induced by $(K_1 \cup J_1) \cup \ldots \cup (K_i \cup J_i)$, at the $i$-th 
step, we extend $P$ to dominate $(K_{i + 1} \cup J_{i + 1})$. In particular, 
the following three properties will be used to show the correctness of our 
algorithm.

\begin{enumerate}[$a)$]
  	\setcounter{enumi}{4}
	\item If $K_i$ is linked to $K_{i+1}$ with an $XY$-edge (resp. a $YX$-edge), 
	then $r_X(K_i)$ (resp. $r_Y(K_i)$) is adjacent to~$\ell_Y(K_{i+1})$ 
	(resp. $\ell_X(K_{i+1})$). \label{prop:extra1}
	\item If $J_i \subseteq Y$ (resp. $J_i \subseteq X$), 
	then~$r_X(K_i) \in N(v)$ (resp. $r_Y(K_i) \in N(v)$), for 
	every $v \in J_i$. \label{prop:extra2}
	\item If $K_i$ is linked to $K_{i+1}$ by an $XY$-edge and a $YX$-edge, 
	then $J_i = \emptyset$. \label{prop:extra3}
\end{enumerate}

We observe that~$\ref{prop:extra1})$ follows from the convexity of the 
orderings of $X$ and $Y$. Moreover, both~$\ref{prop:extra2})$ 
and~$\ref{prop:extra3})$ follow from~$\ref{prop:Ji})$ and~$\ref{prop:inK})$ 
using the fact that the vertex ordering is biconvex and strong. 
Let $P = \langle a_1, \ldots, a_n \rangle$ and $Q = \langle b_1, \ldots, b_m \rangle$ be 
vertex-disjoint paths, in $G$, such that $a_n$ is adjacent to $b_1$.  
Then, the \textit{concatenation} of $P$ and $Q$, denoted by $P \cdot Q$, is 
the path $\langle a_1, \ldots, a_n, b_1, \ldots, b_m \rangle$. Now, we 
present Algorithm~\ref{alg:dom-path} that finds a dominating path in bipartite 
permutation graphs.

\begin{algorithm}
  \caption{Dom-Path-BPG$(G)$}
  \label{alg:dom-path}
  Input: A connected bipartite permutation graph $G = (X \cup Y, E)$\\
  Output: A dominating path $P$ of $G$
  
\begin{algorithmic}[1]
\State Find a strong convex ordering of $X$ and $Y$, say $\{ x_1, \ldots, x_n \}$ and $\{ y_1, \ldots, y_m \}$ \label{lst:ord2}
\State Let $(K_1, J_1), \ldots, (K_k, J_k)$ be a complete bipartite decomposition of $G$ \label{lst:decomp}
\State $P \gets \emptyset$
\State $v \gets \ell_X(K_1)$ \label{lst:beg1}
\For{$i \gets 1$ \To {$k - 1$}} \label{lst:for}
	\If {$K_i$ is linked to $K_{i+1}$ by an $XY$-edge}
		\State Let $P'$ be a path between $v$ and $r_X(K_i)$ \label{lst:p1}
		\State $v \gets \ell_Y(K_{i + 1})$ \label{lst:beg2}
	\Else
		\State Let $P'$ be a path between $v$ and $r_Y(K_i)$ \label{lst:p2}	
		\State $v \gets \ell_X(K_{i + 1})$ \label{lst:beg3}
	\EndIf
	\State $P \gets P \cdot P'$
\EndFor	\label{lst:endfor}
\If {$J_k \subseteq Y$}	
	\State Let $P'$ be a path between $v$ and $r_X(K_k)$ \label{lst:p3}
\Else
	\State Let $P'$ be a path between $v$ and $r_Y(K_k)$ \label{lst:p4}
\EndIf
\State $P \gets P \cdot P'$
\State \Return $P$
\end{algorithmic}
\end{algorithm}

\begin{theorem} \label{thm:dompath}
	Algorithm~\ref{alg:dom-path} finds a dominating 
	path in a bipartite permutation graph $G$ in $\bigO(|V(G)| + |E(G)|)$.
\end{theorem}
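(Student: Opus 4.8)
The plan is to establish two things about the output $P$ of Algorithm~\ref{alg:dom-path}: that it is a simple dominating path of $G$, and that it is produced in linear time. Throughout I would fix the strong biconvex ordering and the complete bipartite decomposition $(K_1,J_1),\ldots,(K_k,J_k)$ computed in lines~\ref{lst:ord2}--\ref{lst:decomp}, and use properties~\ref{prop:comp})--\ref{prop:extra3}) as already established.

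First I would verify that $P$ is a well-defined simple path. Each $P'$ is a path inside $K_i$ whose endpoints are the prescribed vertices $v$ and $r_X(K_i)$ (or $r_Y(K_i)$), both lying in $V(K_i)$; such a path exists because $K_i$ is a nontrivial complete bipartite graph (property~\ref{prop:comp})). The concatenations are legitimate: after an iteration that used an $XY$-edge, $P$ ends at $r_X(K_i)$ while the next start is $v=\ell_Y(K_{i+1})$, and these are adjacent by property~\ref{prop:extra1}); the $YX$-case and the final block are joined identically. As the sets $V(K_i)$ are pairwise disjoint and every $P'$ uses only vertices of its own $K_i$ (no $J_i$ vertex is ever placed on $P$), no vertex is repeated, so $P$ is simple.

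Next I would prove that $P$ dominates $V(G)=\bigcup_i\bigl(V(K_i)\cup J_i\bigr)$. Each $K_i$ is dominated because $P'$ meets both sides of the complete bipartite graph $K_i$ (when its endpoints lie on the same side, the intermediate vertex of a bipartite path supplies the other side; the only exception, $v$ equal to the target, forces that side to be a singleton already on $P'$), and any vertex of one side is adjacent to any vertex of the other. The crux, and the step I expect to be the main obstacle, is matching the side of $J_i$ to the endpoint chosen by the algorithm. I claim that if $\emptyset\neq J_i\subseteq Y$ then the link $K_iK_{i+1}$ is an $XY$-edge (hence the target is $r_X(K_i)$), and symmetrically $J_i\subseteq X$ forces a $YX$-edge. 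By property~\ref{prop:extra3}) a nonempty $J_i$ admits a single edge type, so I only need to exclude the wrong one. Assume $J_i\subseteq Y$ but the link were a $YX$-edge; then $r_Y(K_i)$ is adjacent to $\ell_X(K_{i+1})$ by property~\ref{prop:extra1}), while any $b\in J_i$ is adjacent to some $x\in X\cap V(K_i)$ with $x\le r_X(K_i)<\ell_X(K_{i+1})$ and $r_Y(K_i)<b$. Applying the strong-ordering property~$(\star)$ to $x\le\ell_X(K_{i+1})$ and $r_Y(K_i)\le b$ (using the edges $xb$ and $\ell_X(K_{i+1})r_Y(K_i)$) produces the edge $b\,\ell_X(K_{i+1})$, which places a neighbour of $b$ in $V(K_{i+1})$ and contradicts $N_G(J_i)\subseteq V(K_i)$ (property~\ref{prop:inK})). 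Thus the link is an $XY$-edge and, by property~\ref{prop:extra2}), $r_X(K_i)$ dominates $J_i$; the case $J_i\subseteq X$ is symmetric, and property~\ref{prop:class}) guarantees these are the only possibilities. For $i=k$ no such lemma is needed: the If/Else selects $r_X(K_k)$ exactly when $J_k\subseteq Y$ and $r_Y(K_k)$ otherwise, so property~\ref{prop:extra2}) again dominates $J_k$. Hence $P$ dominates $G$.

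Finally, for the running time I would cite the linear-time strong ordering of bipartite permutation graphs~\cite{SpinradBS87} and the $\bigO(|V(G)|)$ complete bipartite decomposition~\cite{UeharaV07}. Each loop pass builds one $P'$ using only the vertices of $K_i$, so the loop costs $\bigO\bigl(\sum_i|V(K_i)|\bigr)=\bigO(|V(G)|)$; adding the cost of reading $G$ yields the claimed $\bigO(|V(G)|+|E(G)|)$ bound. The delicate point remains the side-matching claim, which is exactly where the \emph{strength} of the ordering (property~$(\star)$), and not merely its biconvexity, is used.
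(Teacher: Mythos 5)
Your proof is correct and follows essentially the same route as the paper: walk through the complete bipartite decomposition block by block, use property~$\ref{prop:extra1}$) to justify the concatenations, argue that each $P'$ meets both sides of $K_i$, and use properties~$\ref{prop:extra2}$) and~$\ref{prop:extra3}$) to dominate the $J_i$, with the same linear-time accounting. The only difference is that you explicitly derive the side-matching claim (a nonempty $J_i\subseteq Y$ forces the link to $K_{i+1}$ to be an $XY$-edge) from the strong ordering property~$(\star)$, a detail the paper leaves implicit in its statements of~$\ref{prop:extra2}$) and~$\ref{prop:extra3}$); this is a useful addition rather than a divergence.
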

\begin{proof}
	First, we show that the algorithm correctly constructs a 
	path $P$. Note that lines~\ref{lst:beg1}, \ref{lst:beg2}, and~\ref{lst:beg3} 
	imply that the variable $v$ represents either $\ell_X(K_i)$ or~$\ell_Y(K_i)$ 
	at the beginning of each iteration of the loop at 
	lines~\ref{lst:for}-\ref{lst:endfor}. Since $K_i$ 
	is complete bipartite, there is always a path between $v$ 
	and $r_X(K_i)$ or $r_Y(K_i)$. Furthermore, by~$\ref{prop:extra1}$), 
	$r_X(K_i)$ (resp. $r_Y(K_i)$) is adjacent to $\ell_Y(K_{i+1})$ 
	(resp.\ $\ell_X(K_{i+1})$) when there exists an $XY$-edge (resp.\  
	a $YX$-edge) between $K_{i}$ and $K_{i + 1}$. Therefore, Algorithm~\ref{alg:dom-path} 
	returns a path.

	Now, we show that $P$ is a dominating path. First, we prove that $P$ 
	dominates $V(K_i)$, for $i=1, \ldots, k$. Observe that, if $K_i \cong K_{n,m}$ 
	where $n, m \geq 2$, then any path between $v$ and $r_X(K_i)$ 
	or $r_Y(K_i)$ must contain an edge of $K_i$. Thus, $P$ dominates $V(K_{i})$.
	Now, suppose that $K_i \cong K_{1,m}$ for $m \geq 1$.
	Without loss of generality, suppose that $\ell_X(K_i) = r_X(K_i)$ (a center 
	of the star $K_{1,m}$). Observe that, either 
	if~$v = \ell_X(K_i)$ or~$v = \ell_Y(K_i)$, the path $P'$ 
	(at lines~\ref{lst:p1}, \ref{lst:p2}, \ref{lst:p3}, \ref{lst:p4}) 
	will contain a center of $K_{1,m}$. Therefore, $P$ dominates every~$K_i$. 

	Finally, we show that~$P$ dominates 
	every $J_i$, for~$i = 1, \ldots, k$. For this, 
	we show that $P$ contains $r_X(K_i)$ if~$J_i \subseteq Y$, otherwise 
	it contains $r_Y(K_i)$.
	Let $J_i \neq \emptyset$ such that $i < k$. Then, 
	by~$\ref{prop:extra3})$, $K_i$ is linked to~$K_{i+1}$ either by $XY$-edges 
	or $YX$-edges. Without loss of generality, suppose that there exists 
	an $XY$-edge, say~$xy \in E(G)$, between $K_i$ and $K_{i+1}$. 
	Then, line~\ref{lst:p1} implies that $P$ contains $r_X(K_i)$. Thus, 
	by~$\ref{prop:extra2})$, $P$ dominates $J_i$. Finally, lines~\ref{lst:p3} 
	and~\ref{lst:p4} ensure that $P$ dominates $J_k$.

	Regarding the computational complexity of the algorithm, line~\ref{lst:ord2} is
	computed in $\bigO(|V(G)| + |E(G)|)$ time~\cite{SpinradBS87}.
	Moreover, the complete bipartite decomposition of $G$ is computed  
	in $\bigO(|V(G)|)$ time~\cite{UeharaV07}. To conclude, 
	the paths at lines \ref{lst:p1}, \ref{lst:p2}, \ref{lst:p3} and \ref{lst:p4} 
	are computed in constant time, as each $K_i$ is complete bipartite. Therefore, 
	the complexity of Algorithm~\ref{alg:dom-path} is $\bigO(|V(G)| + |E(G)|)$.
	\qed
\end{proof}

\begin{figure}
	\centering
	\includegraphics[scale=.5]{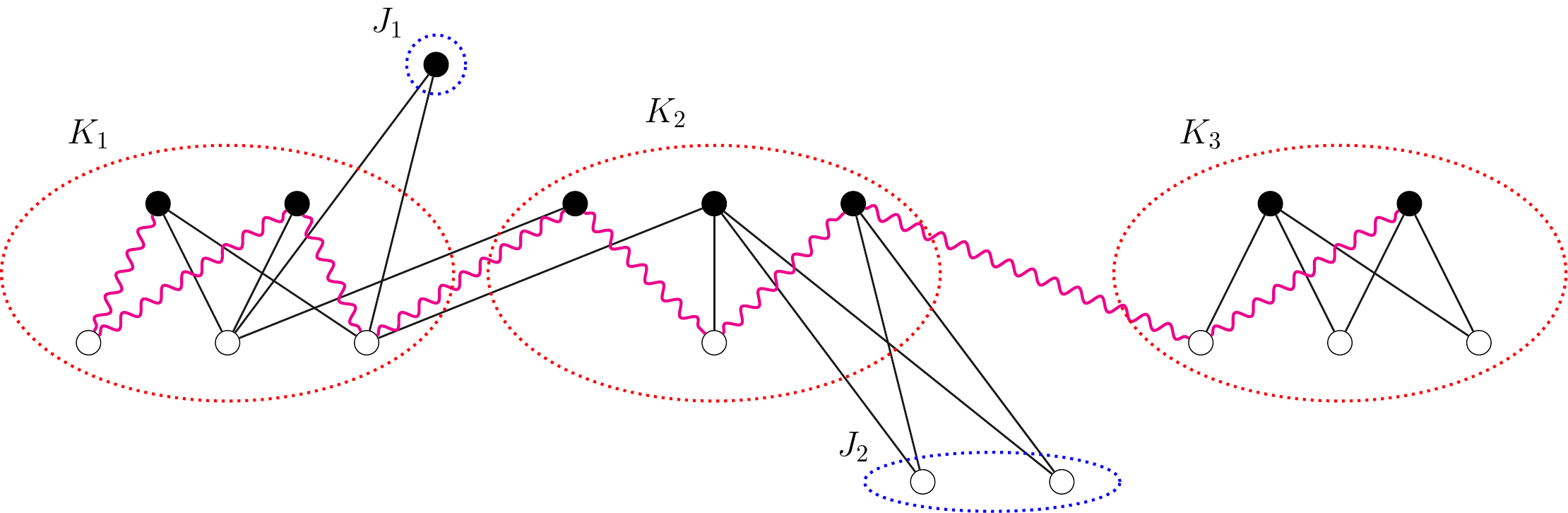}
	\caption{A dominating path found by Algorithm~\ref{alg:dom-path} on a bipartite permutation graph.}
	\label{fig:alg-dom}
\end{figure}

In Figure~\ref{fig:alg-dom}, we show a bipartite permutation graph. The set $X$ 
(resp.~$Y$) is represented by the black (resp.~white) vertices, and a dominating path 
found by Algorithm~\ref{alg:dom-path} is represented by the wavy edges. 
In what follows, we show how to modify Algorithm~\ref{alg:dom-path} 
in order to find a dominating path in a biconvex graph. For this, we use 
a result of Yu \& Chen~\cite{YuC95} which states that every biconvex graph 
contains a bipartite permutation graph as an induced subgraph.
Let $G = (X \cup Y, E)$ be a biconvex graph such that $X = \{ x_1, \ldots, x_n\}$ 
and $Y = \{ y_1, \ldots, y_m\}$ represent a biconvex ordering of $X \cup Y$. 
Let $x_L$ (resp. $x_R$) be the vertex in $N(y_1)$ (resp. $N(y_m)$) 
such that $N(x_L)$ (resp. $N(x_R)$) is not properly contained in any other 
neighborhood set. In case of ties, we choose $x_L$ (resp.~$x_R$) to be 
the smallest (resp.~largest) index vertex. Observe that, we 
may assume that~$x_L \leq x_R$, otherwise we can consider the 
reverse ordering of $X$. Now, let $X_p = \{ x_i : x_L \leq x \leq x_R\}$, 
and let~$G_p = G[X_p \cup Y]$. Yu \& Chen~\cite{YuC95} showed the 
following.

\begin{lemma}[Yu \& Chen, 1995] \label{lem:yu}
	$G_p$ is a bipartite permutation graph.
\end{lemma}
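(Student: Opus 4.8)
The plan is to prove that $G_p$ is a bipartite permutation graph by exhibiting a \emph{strong} ordering of $V(G_p)$, and then invoking the characterization of bipartite permutation graphs as exactly the bipartite graphs that admit a strong ordering~\cite{SpinradBS87}. The natural candidate is the ordering inherited from $G$: order $Y$ as $y_1, \ldots, y_m$ and order $X_p$ as the sub-sequence $x_L, \ldots, x_R$ of $X$. First I would dispatch the cheap facts. Since every vertex of $Y$ survives in $G_p$, for each $x \in X_p$ we have $N_{G_p}(x) = N_G(x)$, still an interval of $Y$; and for each $y \in Y$ we have $N_{G_p}(y) = N_G(y) \cap X_p$, the intersection of two intervals of $X$, hence an interval of $X_p$. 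Thus $G_p$ is biconvex under the inherited ordering, and the whole burden is to verify property~$(\star)$.

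For a vertex $x \in X$ write $\ell(x)$ and $r(x)$ for the smallest- and largest-indexed neighbours of $x$ in $Y$. Using $Y$-convexity, a short argument shows that $(\star)$ holds for the inherited ordering if and only if the neighbourhood intervals form a monotone staircase along $X_p$, that is, both $x \mapsto \ell(x)$ and $x \mapsto r(x)$ are non-decreasing as $x$ runs through $x_L, \ldots, x_R$. Indeed, a failure of $(\star)$ produces a \emph{crossing} pair of edges $x_i y_c, x_k y_a \in E$ with $x_i < x_k$, $y_a < y_c$ and (say) $x_i y_a \notin E$, which is precisely a drop of the left endpoint $\ell$ (the symmetric failure $x_k y_c \notin E$ is a drop of $r$). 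The clean mechanism for ruling such a crossing out is the following: if the anchor $x_L$ is adjacent to $y_a$, then by $X$-convexity $N(y_a)$ is an interval of $X$ containing both $x_L \le x_i$ and $x_k$, hence containing $x_i$, so that $x_i y_a \in E$, a contradiction. The symmetric statement, using $x_R \ge x_k$ and the fact that $N(y_c)$ is an interval, handles the drops of the right endpoint.

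The main obstacle is the remaining case, where the anchor fails to reach the offending vertex, i.e.\ $a > r(x_L)$ for a left-endpoint drop (equivalently $c < \ell(x_R)$ for a right-endpoint drop). Here I would exploit the defining \emph{maximality} of the anchors rather than mere adjacency: because every neighbour of $y_1$ has a nested neighbourhood of the form $\{y_1, \ldots, y_{r(x')}\}$, the choice of $x_L$ guarantees that $N(x_L) = \{y_1, \ldots, y_t\}$ reaches as far right as any neighbour of $y_1$, and symmetrically $N(x_R) = \{y_s, \ldots, y_m\}$ reaches as far left as any neighbour of $y_m$. I expect that a left-endpoint drop with $a > t$, together with $x_i, x_k \in [x_L, x_R]$ and the connectivity of $G$, forces an asteroidal triple in $G$ whose three arms run out to $y_1$ through $x_L$, out to $y_m$ through $x_R$, and through the crossing edges, and that the trimming to $X_p$ is exactly what severs this triple, so that no such configuration can live inside $G_p$. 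Making this last step precise, by identifying the induced obstruction (e.g.\ the subdivided claw $T_2$) and verifying that the trimming destroys it, is the delicate part; once it is settled for both endpoints, the inherited ordering satisfies $(\star)$, is therefore strong, and $G_p$ is a bipartite permutation graph~\cite{SpinradBS87}.
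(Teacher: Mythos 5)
First, a point of reference: the paper does not prove this statement at all. Lemma~\ref{lem:yu} is imported verbatim from Yu \& Chen~\cite{YuC95}, and its refinement (Lemma~\ref{lem:lorna}) from Abbas \& Stewart~\cite{AbbasL00}, so there is no in-paper argument to compare yours against; you are attempting to reconstruct a proof from the literature. Your overall plan --- exhibit a strong ordering of $V(G_p)$ and invoke the characterization of bipartite permutation graphs via strong orderings~\cite{SpinradBS87} --- is a sensible one, and several steps are sound: $G_p$ is biconvex under the inherited ordering; for a biconvex ordering, property~$(\star)$ is indeed equivalent to both $x\mapsto \ell(x)$ and $x\mapsto r(x)$ being non-decreasing along $X_p$; and the case where the anchor $x_L$ (resp.\ $x_R$) is adjacent to the offending vertex $y_a$ (resp.\ $y_c$) is correctly killed by $X$-convexity.

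The problem is that the entire content of the lemma sits in the case you leave open ($a>r(x_L)$ and its mirror), and the tool you propose for it cannot work as described. Exhibiting an asteroidal triple in $G$ is not a contradiction: $G$ is merely biconvex, and the whole point of the lemma is that $G$ itself need not be a bipartite permutation graph, so $G$ may well contain such triples. More decisively, the offending configuration $x_iy_c, x_ky_a\in E$, $x_iy_a\notin E$ with $x_L\leq x_i<x_k\leq x_R$ consists of four vertices of $G_p$; if it exists, the inherited ordering of $G_p$ is simply not strong, and no amount of ``severing'' structure outside $X_p$ changes that. So you must either (a) rule the configuration out directly --- presumably using the connectivity of $G$ together with the inclusion-maximality of $N(x_L)$ among $\{N(x):x\in N(y_1)\}$ and of $N(x_R)$ among $\{N(x):x\in N(y_m)\}$ (these families are nested intervals, as you note, but you never actually deploy this) --- or (b) concede that an arbitrary biconvex ordering of $G$ may not restrict to a strong ordering of $G_p$ and construct a different one; the very phrasing of Lemma~\ref{lem:lorna} (``there \emph{exists} an ordering of $V(G)$ such that \dots is a strong ordering of $V(G_p)$'') is a warning that option (b) may be unavoidable. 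As written, the decisive case is missing and the proposed mechanism for closing it is the wrong one.
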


Moreover, Abbas \& Stewart~\cite{AbbasL00} showed further properties 
of $X - X_p$.
	
\begin{lemma}[Abbas \& Stewart, 2000] \label{lem:lorna}
	Let $G = (X \cup Y, E)$ be a biconvex graph. There exists 
	an ordering of $V(G)$, say $X = \{x_1, \ldots, x_n\}$ 
	and $Y = \{ y_1, \ldots, y_m \}$, such that the following hold:
	\begin{enumerate}[$a)$]
		\item $G_p$ is a bipartite permutation graph.
		\item $X_p = \{x_L, x_{L+1}, \ldots, x_R \}$ and $Y = \{ y_1, y_2, \ldots, y_m \}$ is a strong ordering of $V(G_p)$.
		\item For all $x_i$ and $x_j$, where $x_1 \leq x_i < x_j \leq x_L$ 
		or $x_R \leq x_j < x_i \leq x_n$, then $N(x_i) \subseteq N(x_j)$. \label{it:important} 
	\end{enumerate}
\end{lemma}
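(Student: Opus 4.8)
The plan is to treat the three items separately, obtaining item~$a)$ directly from Lemma~\ref{lem:yu} and spending the real effort on items~$b)$ and~$c)$, which amount to exhibiting a \emph{single} ordering that simultaneously makes $G_p$ strongly ordered and makes the two \emph{tails} $\{x_1,\dots,x_{L-1}\}$ and $\{x_{R+1},\dots,x_n\}$ nested. Since $G_p = G[X_p \cup Y]$ is a bipartite permutation graph by Lemma~\ref{lem:yu}, I would first fix a strong ordering of $G_p$ (which exists because $G_p$ is a bipartite permutation graph, and which is biconvex by the Yu \& Chen observation recalled above); this determines the order of $Y$ and the order within $X_p$. I would then \emph{insert and reorder only the tails}, placing the left-tail vertices so that their neighbourhoods increase toward $x_L$, and the right-tail vertices so that their neighbourhoods increase toward $x_R$. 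The whole construction then rests on two structural claims about the tails: each tail neighbourhood is contained in $N(x_L)$ (on the left) or in $N(x_R)$ (on the right), and on each side the tail neighbourhoods form a \emph{chain} under inclusion, so that the desired increasing reordering actually exists.

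The crux is the chain claim. I would argue it by contradiction: suppose two left-tail vertices $x$ and $x'$ have incomparable neighbourhoods, and choose witnesses $y \in N(x)\setminus N(x')$ and $y' \in N(x')\setminus N(x)$. Because $G$ is $Y$-convex, $N(x)$ and $N(x')$ are intervals of $Y$, and because $G$ is $X$-convex, $N(y)$ and $N(y')$ are intervals of $X$; combining these interval constraints with $x,x' < x_L$ and with $x_L \in N(y_1)$ pins down how $x$, $x'$, and $x_L$ must interleave. The reason for choosing $x_L$ as a neighbour of $y_1$ whose neighbourhood is \emph{inclusion-maximal} is precisely that a genuine tail vertex cannot carry a neighbour lying outside $N(x_L)$ together with one inside it without producing a neighbourhood that properly contains $N(x_L)$, contradicting maximality; this is what forces both the containment $N(x)\subseteq N(x_L)$ and the comparability of $N(x)$ and $N(x')$. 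I expect this case analysis, carried out symmetrically for the right tail using $x_R$ and $y_m$, to be the main obstacle, since it must be coordinated carefully with the tie-breaking rules in the definitions of $x_L$ and $x_R$ and with the fact that which vertices land in $X_p$ versus in the tails is itself a consequence of the chosen ordering.

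Granting the two structural claims, the rest is bookkeeping. For item~$c)$, reordering each tail by increasing neighbourhood gives $N(x_i)\subseteq N(x_j)$ whenever $x_1 \le x_i < x_j \le x_L$, and symmetrically $N(x_i)\subseteq N(x_j)$ whenever $x_R \le x_j < x_i \le x_n$, which is exactly the stated property. For items~$a)$ and~$b)$, I would check that the insertion of the tails does not disturb $G_p$: since every tail neighbourhood is contained in $N(x_L)$ or $N(x_R)$ and the tails are now nested, for each $y$ the set of left-tail vertices adjacent to $y$ is a suffix of the left tail lying immediately to the left of $x_L$ (with $y \in N(x_L)$ whenever that suffix is nonempty), and dually on the right; hence each $N(y)$ remains an interval of $X$ and the global ordering stays biconvex, while its restriction to $X_p \cup Y$ is still the strong ordering we started with. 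This delivers the strong ordering of $G_p$ required in~$b)$, and item~$a)$ is then Lemma~\ref{lem:yu} applied to this ordering.
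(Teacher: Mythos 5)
This statement is imported verbatim from Abbas \& Stewart (2000); the paper offers no proof of it, so there is nothing internal to compare your argument against and it must stand on its own. Your overall architecture (get $a)$ from Lemma~\ref{lem:yu}, prove that the tail neighbourhoods are contained in $N(x_L)$ resp.\ $N(x_R)$ and form inclusion chains, reorder the tails, then check biconvexity is preserved) is the right shape, and the bookkeeping in your last paragraph is essentially sound \emph{granting} the containment claim. Note also that once $N(x),N(x')\subseteq N(x_L)$ is known, the chain property is almost free: if $x<x'$ and $y\in N(x)\setminus N(x')$ with $y\in N(x_L)$, then $N(y)$ is an interval of $X$ containing $x$ and $x_L$, hence $x'$, a contradiction. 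So the entire weight of the lemma rests on the single claim $N(x)\subseteq N(x_L)$ for every tail vertex $x<x_L$ (and symmetrically on the right), and that is exactly the claim you do not prove.

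The mechanism you offer for it -- a tail vertex with a neighbour outside $N(x_L)$ and one inside would have a neighbourhood properly containing $N(x_L)$, contradicting maximality -- only works in one case. Since $x_L\in N(y_1)$ and $y_1$ is the first vertex of $Y$, $Y$-convexity makes $N(x_L)$ a \emph{prefix} $\{y_1,\dots,y_t\}$ of $Y$; if $x\in N(y_1)$ and $x$ has a neighbour $y>y_t$, then indeed $N(x)\supseteq\{y_1,\dots,y\}\supsetneq N(x_L)$ and maximality is violated. But if $x\notin N(y_1)$, an interval such as $N(x)=\{y_2,\dots,y_{t+1}\}$ meets $N(x_L)$ and leaves it without containing it, and maximality of $N(x_L)$ says nothing. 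Ruling this case out genuinely requires ingredients you never invoke: the connectivity of $G$, the maximality of $N(x_R)$ at the other end (one shows a witness $y\notin N(x_L)$ would have to sit strictly between the prefix $N(x_L)$ and the suffix $N(x_R)$ with all of its neighbours trapped on one side, eventually disconnecting $G$), and the normalization $x_L\le x_R$ obtained by possibly reversing $X$ -- small examples where the claim appears to fail are killed precisely by that reversal, not by maximality. Finally, the circularity you flag in passing is real and unresolved: $y_1$, $y_m$, $x_L$, $x_R$ and hence $X_p$ and $G_p$ are all defined from the ordering, so ``first fix a strong ordering of $G_p$, then insert the tails'' may change the order of $Y$ and thereby change which graph $G_p$ is; you must argue that a strong ordering of $G_p$ can be chosen compatibly with the original roles of $y_1$ and $y_m$. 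As written, the proposal is an honest plan with the hard part explicitly deferred, not a proof.
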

	
Our aim is to use the above lemma to find a dominating path on a biconvex graph. 
As~$G_p$ is a bipartite permutation graph, it has a complete bipartite decomposition, 
say~$(K_1, J_1), \ldots, (K_k,J_k)$. Furthermore, 
observe that~$x_L = \ell_X(K_1)$ and~$x_R \in \{ r_X(K_k) \} \cup J_k$. 
Thus, by Lemma~\ref{lem:lorna}~$\ref{it:important})$, we have the following.
\begin{corollary} \label{cor:firstlast}
	Let $G_p$ be the bipartite permutation graph defined from $G$, 
	and let $(K_1, J_1), \ldots, (K_k, J_k)$ be its complete bipartite decomposition.
	Then, 
	\[
		\begin{array}{rl}
			N(x_i) \subseteq V(K_1), & \text{for } i = 1, 2, \ldots, L, \\
			N(x_j) \subseteq V(K_k) \cup J_k, & \text{for } j = R, R + 1, \ldots, n. \\
		\end{array}
	\]
\end{corollary}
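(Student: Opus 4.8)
The plan is to use the nesting of neighborhoods furnished by Lemma~\ref{lem:lorna}~$\ref{it:important})$ to reduce each inclusion to a single boundary vertex, and then read off that vertex's neighborhood from the complete bipartite decomposition. Concretely, for the first inclusion, Lemma~\ref{lem:lorna}~$\ref{it:important})$ with $x_j = x_L$ gives $N(x_i) \subseteq N(x_L)$ for every $i \le L$ (the case $i = L$ being trivial), so it suffices to show $N(x_L) \subseteq V(K_1)$. Since $x_L = \ell_X(K_1)$ and $x_L \in X_p$ while $G_p$ contains all of $Y$, we have $N(x_L) = N_{G_p}(x_L)$, and the decomposition identity $V(K_1) = N_{G_p}(\ell_X(K_1)) \cup N_{G_p}(\ell_Y(K_1))$ gives $N(x_L) \subseteq V(K_1)$ immediately. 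This settles the first inclusion.

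For the second inclusion the same reduction applies in reverse: Lemma~\ref{lem:lorna}~$\ref{it:important})$ now yields $N(x_j) \subseteq N(x_R)$ for every $j \ge R$, so it is enough to prove $N(x_R) \subseteq V(K_k) \cup J_k$. I would split according to the two possibilities for $x_R$. If $x_R \in J_k$ (so $J_k \subseteq X$), then property~$\ref{prop:inK})$ gives $N(x_R) \subseteq V(K_k)$ at once. The substantive case is $x_R = r_X(K_k)$.

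The main obstacle is precisely this last case: the decomposition identity expresses $V(K_k)$ through the \emph{left} endpoints $\ell_X(K_k), \ell_Y(K_k)$, so it does not directly bound the neighborhood of the \emph{right} endpoint $r_X(K_k)$. I would handle it in two steps. First, every $Y$-vertex of index at least $\ell_Y(K_k)$ belongs to $(V(K_k)\cap Y)\cup(J_k\cap Y)$: this follows from property~$\ref{prop:Ji})$, since the $Y$-parts of the blocks appear in increasing order, $J_{k-1}$ (when contained in $Y$) precedes $\ell_Y(K_k)$, and $K_k$ is the final block. Second, I would rule out any neighbor $y'$ of $r_X(K_k)$ with $y' < \ell_Y(K_k)$: as $K_k$ is complete bipartite, both $\ell_X(K_k)$ and $r_X(K_k)$ are adjacent to $\ell_Y(K_k)$, so applying the strong-ordering property~$(\star)$ to the edges $\ell_X(K_k)\,\ell_Y(K_k)$ and $r_X(K_k)\,y'$ (using $\ell_X(K_k) \le r_X(K_k)$ and $y' < \ell_Y(K_k)$) forces $\ell_X(K_k)\,y' \in E$; but the decomposition identity puts $N_{G_p}(\ell_X(K_k)) \subseteq V(K_k)$, contradicting $y' < \ell_Y(K_k) = \min\{y \in V(K_k)\cap Y\}$. (If $\ell_X(K_k) = r_X(K_k)$, the identity bounds $N(r_X(K_k))$ directly and no appeal to $(\star)$ is needed.) Combining the two steps gives $N(r_X(K_k)) \subseteq (V(K_k)\cap Y)\cup(J_k\cap Y) \subseteq V(K_k)\cup J_k$, which together with the reduction completes the argument. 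The only delicate bookkeeping I anticipate is checking that the index inequalities required by $(\star)$ genuinely hold in the strong ordering and that $y' \notin V(K_k)$; the remaining facts are routine consequences of the decomposition.
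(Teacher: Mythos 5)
Your proposal is correct and takes essentially the same route the paper intends: the paper presents this corollary as an immediate consequence of the observations $x_L=\ell_X(K_1)$ and $x_R\in\{r_X(K_k)\}\cup J_k$ together with the nesting property of Lemma~\ref{lem:lorna}~$\ref{it:important})$, which is exactly your reduction to the boundary vertices $x_L$ and $x_R$. The only difference is that you explicitly verify $N(r_X(K_k))\subseteq V(K_k)\cup J_k$ via the strong ordering and the identity $V(K_i)=N(\ell_X(K_i))\cup N(\ell_Y(K_i))$, a step the paper leaves implicit; your verification is sound.
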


The previous result implies that, to obtain a dominating path of $G$, we 
need to carefully choose the vertices in $(V(K_1) \cup \{ x_1, \ldots, x_{L-1} \})$ 
and $(V(K_k) \cup J_k \cup \{ x_{R+1}, \ldots, x_n \})$ that belong to the path. 
Algorithm~\ref{alg:dom-path-biconvex} finds a dominating path in a 
biconvex graph. The line~\ref{lst2:loop} of Algorithm~\ref{alg:dom-path-biconvex} 
represents the loop of lines~\ref{lst:for}-\ref{lst:endfor} of 
Algorithm~\ref{alg:dom-path}. We recall the following property of a complete 
bipartite decomposition of $G_p$. We will use it to show the correctness of 
our algorithm.

\begin{enumerate}[$a)$]
  	\setcounter{enumi}{5}
	\item If $J_i \subseteq Y$ (resp. $J_i \subseteq X$), 
	then $r_X(K_i) \in N(v)$ (resp. $r_Y(K_i) \in N(v)$), for 
	every $v \in J_i$.
\end{enumerate}

\begin{algorithm}
	\caption{Dominating-Path-Biconvex$(G)$}
 	\label{alg:dom-path-biconvex}
  	Input: A connected biconvex graph $G = (X \cup Y, E)$\\
  	Output: A dominating path $P$ of $G$
	\begin{algorithmic}[1]
	\State Find a biconvex ordering $\{ x_1, \ldots, x_n \}$ and $\{ y_1, \ldots, y_m \}$ that satisfy Lemma~\ref{lem:lorna}
	\State Let $G_p$ be the graph induced by $\{ x_L, \ldots, x_R \} \cup Y$
	\State Let $(K_1, J_1), \ldots, (K_k, J_k)$ be the complete bipartite decomposition of $G_p$
	\If {$r_Y(K_1) \in N_G(x_1)$}
		\State $P \gets \langle \ell_X(K_1) \rangle$ \label{lst2:start1}
		\State $v \gets r_Y(K_1)$ 
	\Else
		\State $P \gets \emptyset$ \label{lst2:start2}		
		\State $v \gets \min\{ y_j : y_j \in N_G(x_1) \}$		
	\EndIf
	\State Process $(K_1 \cup J_1), \ldots, (K_{k-1} \cup J_{k-1})$ \label{lst2:loop}
	\State Let $P'$ be a path between $v$ and $r_X(K_k)$ \label{lst2:rx}
	\If {$N_G(x_n) \cap J_k \neq \emptyset$}		
		\State Let $u$ be a vertex in $N_G(x_n) \cap J_k$ \label{lst2:u1}
		\State $P' \gets P' \cdot \langle u \rangle$  \label{lst2:jk}
	\Else
		\If {$V(P') \cap N_G(x_n) = \emptyset$}
			\State Let $u$ be a vertex in $N_G(x_n)$ \label{lst2:u2}
			\State $P' \gets P' \cdot \langle u \rangle$  \label{lst2:xn}
		\EndIf
	\EndIf
	\State $P \gets P \cdot P'$ \label{lst2:final}
	\State \Return $P$
\end{algorithmic}
\end{algorithm}

\begin{theorem} \label{thm:dompath-biconvex}
	Algorithm~\ref{alg:dom-path-biconvex} finds a dominating 
	path in a connected biconvex graph $G$.
\end{theorem}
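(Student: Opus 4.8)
The plan is to read Algorithm~\ref{alg:dom-path-biconvex} as the machinery of Algorithm~\ref{alg:dom-path} applied to the bipartite permutation subgraph $G_p$ (which is a bipartite permutation graph by Lemma~\ref{lem:yu}), modified only at its two ends so as to also dominate the vertices $\{x_1, \ldots, x_{L-1}\}$ and $\{x_{R+1}, \ldots, x_n\}$ lying outside $G_p$. Fixing an ordering as in Lemma~\ref{lem:lorna} and letting $(K_1, J_1), \ldots, (K_k, J_k)$ be the complete bipartite decomposition of $G_p$, I would first check that the returned object is a genuine path. The loop of line~\ref{lst2:loop} is exactly the loop of Algorithm~\ref{alg:dom-path}, so its internal concatenations are legal by completeness of each $K_i$ together with property~$\ref{prop:extra1})$. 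At the start, $\ell_X(K_1)$ is adjacent to $r_Y(K_1)$ by completeness of $K_1$, so the branch of line~\ref{lst2:start1} is legal, and $v$ becomes a vertex of $N_G(x_1) \cap V(K_1)$ in either branch. At the end, the vertex $u$ appended in line~\ref{lst2:jk} is adjacent to $r_X(K_k)$ by property~$\ref{prop:extra2})$ (since then $u \in J_k \subseteq Y$), while the vertex appended in line~\ref{lst2:xn} lies in $V(K_k) \cap Y$ and is adjacent to $r_X(K_k)$ by completeness; hence the concatenation of line~\ref{lst2:final} is legal as well.

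Next I would show that $P$ dominates $G_p$. For the blocks $K_1, \ldots, K_k$ and the sets $J_1, \ldots, J_{k-1}$ the argument of Theorem~\ref{thm:dompath} carries over, once one verifies that the two modified ends do not spoil it: the initialization forces an edge of $K_1$ onto $P$ (the edge $\ell_X(K_1)\,r_Y(K_1)$ in the first branch, or an edge of $K_1$ incident to $v = \min\{ y_j : y_j \in N_G(x_1)\}$ in the second), so $K_1$ is dominated, and line~\ref{lst2:rx} still drives the path through $K_k$ to $r_X(K_k)$. The set $J_k$ is dominated because $r_X(K_k) \in V(P)$ is, by property~$\ref{prop:extra2})$, adjacent to every vertex of $J_k$ when $J_k \subseteq Y$; in the symmetric case $J_k \subseteq X$ one routes the $K_k$-segment of line~\ref{lst2:rx} through $r_Y(K_k)$, which by property~$\ref{prop:extra2})$ is a common neighbor of $J_k$. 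For the extra left vertices, Lemma~\ref{lem:lorna}~$\ref{it:important})$ yields the nesting $N(x_1) \subseteq N(x_2) \subseteq \cdots \subseteq N(x_L)$, so a single vertex of $N_G(x_1)$ on $P$ dominates all of $x_1, \ldots, x_{L-1}$; the initialization supplies exactly such a vertex, namely $r_Y(K_1)$ in the first branch and $v$ in the second.

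The step I expect to be the main obstacle is the simultaneous domination of $J_k$ and of the extra right vertices $x_{R+1}, \ldots, x_n$ by the single final segment. By Corollary~\ref{cor:firstlast} every neighbor of such a vertex lies in $V(K_k) \cup J_k$, and Lemma~\ref{lem:lorna}~$\ref{it:important})$ gives $N(x_n) \subseteq N(x_{n-1}) \subseteq \cdots \subseteq N(x_R)$, so it again suffices to place one vertex of $N_G(x_n)$ on $P$. The three-way test of the algorithm does this: if some neighbor $u$ of $x_n$ lies in $J_k$ (line~\ref{lst2:u1}) then $J_k \subseteq Y$, and appending $u$ dominates every $x_j$ with $j \geq R$ while $J_k$ stays dominated by $r_X(K_k)$; otherwise every neighbor of $x_n$ lies in $V(K_k) \cap Y$, and such a vertex is appended (line~\ref{lst2:u2}) only when $P$ does not already contain one. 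The delicate point is to confirm that in this last branch the segment already built through $K_k$ still dominates $J_k$ --- here I would combine property~$\ref{prop:extra2})$ with the nesting of the $J_k$-neighborhoods from property~$\ref{prop:inK})$ to see that the $Y$-vertices already on $P$, or the routing through $r_Y(K_k)$ when $J_k \subseteq X$, cover $J_k$ --- so that appending a neighbor of $x_n$ leaves every earlier domination intact. Combining the three parts shows $\ecc_G(P) \leq 1$, i.e.\ $P$ is a dominating path of $G$.
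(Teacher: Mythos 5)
Your overall route is the paper's: reuse the correctness argument of Algorithm~\ref{alg:dom-path} on $G_p$ for the middle blocks, use Corollary~\ref{cor:firstlast} together with the neighborhood nesting of Lemma~\ref{lem:lorna}~$\ref{it:important})$ to reduce the domination of $\{x_1,\ldots,x_{L-1}\}$ and $\{x_{R+1},\ldots,x_n\}$ to placing one vertex of $N_G(x_1)$ and one vertex of $N_G(x_n)$ on $P$, and then check that the two modified ends still dominate $K_1$, $K_k$ and $J_k$. Your treatment of the left end and of the case $N_G(x_n)\cap J_k\neq\emptyset$ is sound and matches the paper.

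The gap is exactly at the step you flagged: domination of $J_k$ when $J_k\subseteq X$, where the only new vertex available is a $Y$-vertex $w$ of $K_k$ lying in $N_G(x_n)$ (either the appended $u$ of line~\ref{lst2:u2} or a vertex of $V(P')\cap N_G(x_n)$). Neither of your two proposed fixes closes it. Rerouting the segment of line~\ref{lst2:rx} through $r_Y(K_k)$ is a change to the algorithm and is not always available: if $K_k$ is a star whose unique $X$-vertex is $r_X(K_k)$ and $v=\ell_Y(K_k)\neq r_Y(K_k)$, the only path in $K_k$ from $v$ to $r_X(K_k)$ is the single edge $v\,r_X(K_k)$, which misses $r_Y(K_k)$. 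And property~$\ref{prop:inK})$ only nests the neighborhoods of the vertices of $J_k$ among themselves; it does not make any particular $Y$-vertex of $K_k$ on $P$ adjacent to all of $J_k$. The missing idea is the \emph{strong} ordering of $V(G_p)$: for any $x\in J_k$ one has $w\leq r_Y(K_k)$ and $x\leq x_R$, the edge $w x_R$ exists because $N(x_n)\subseteq N(x_R)$ by Lemma~\ref{lem:lorna}~$\ref{it:important})$, and the edge $r_Y(K_k)\,x$ exists by~$\ref{prop:extra2})$; the strong-ordering condition applied to this crossing pair then forces $wx\in E(G)$, so $w$ dominates all of $J_k$. Without invoking the strong ordering (and not just biconvexity) this sub-case remains open.
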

\begin{proof}
	By the proof of Theorem~\ref{thm:dompath}, $P$ dominates $J_1$ and $(K_i \cup J_i)$, 
	for $i = 2, \ldots, k - 1$. In what follows, we show that~$P$ 
	dominates $A = V(K_1) \cup \{ x_1, \ldots, x_{L-1} \}$.
	First, suppose that $r_Y(K_1) \in N_G(x_1)$. By Lemma~\ref{lem:lorna}~\ref{it:important}), 
	the vertex $r_Y(K_1)$ dominates $x_i$, for $x_1 \leq x_i \leq x_{L - 1}$.
	Moreover, since $\{\ell_X(K_1) , r_Y(K_1) \}$ dominates $K_1$, the path~$P$ dominates $A$. 
	Now, suppose that $r_Y(K_1) \notin N_G(x_1)$, and 
	let $y^* = \min\{ y_j : y_j \in N_G(x_1) \}$. Since~$y^* < r_Y(K_1)$, any path 
	from $y^*$ to either $r_X(K_1)$ or $r_Y(K_1)$ must contain a vertex in $V(K_1) \cap X$. 
	Thus, $P$ contains a vertex $x^* \in V(K_1) \cap X$. By similar arguments as before, 
	the set $\{x^*, y^*\} \subseteq V(P)$ dominates $A$.

	Finally, we show that $P$ dominates $B = K_k \cup J_k \cup \{ x_{R+1}, \ldots, x_n \}$.
	Consider the variable $v$ after the execution of line~\ref{lst2:loop}.
	Note that if either $v = \ell_X(K_k)$ or~$v = \ell_Y(K_k)$, the path $P'$ in 
	line~\ref{lst2:rx} dominates $V(K_k)$. Now, we show that $P$ 
	dominates $J_k \cup \{ x_{R+1}, \ldots, x_n \}$. We distinguish two cases. 
	
	\vspace{2mm}
	\noindent \textbf{Case 1:} $N_G(x_n) \cap J_k \neq \emptyset$.

	In this case $J_k \subseteq Y$. By~$\ref{prop:extra2})$, every vertex in $J_k$ 
	is adjacent to $r_X(K_k)$, and thus, $P$ dominates $J_k$. Furthermore, 
	by Lemma~\ref{lem:lorna}~\ref{it:important}), the vertex $u$ (at line~\ref{lst2:u1}) 
	dominates~$x_i$, for $x_{R + 1} \leq x_i \leq x_n$. Therefore, $P$ 
	dominates~$B$.

	\vspace{2mm}
	\noindent \textbf{Case 2:} $N_G(x_n) \cap J_k = \emptyset$. 

	In this case $N_G(x_n) \subseteq V(K_k)$. If $V(P') \cap N_G(x_n) \neq \emptyset$, 
	Lemma~\ref{lem:lorna}~$\ref{it:important})$ implies that $P$ dominates~$B$.
	So, suppose that $V(P') \cap N_G(x_n) = \emptyset$. Then, the vertex 
	$u \in N_G(x_n)$ at line~\ref{lst2:u2} is adjacent to $r_X(K_k)$, 
	and Lemma~\ref{lem:lorna}~$\ref{it:important})$ implies that $P$ 
	dominates $x_i$, for $x_{R + 1} \leq x_i \leq x_{n}$. Now, we show 
	that~$P$ dominates~$J_k$. If~$J_k \subseteq Y$, by~$\ref{prop:extra2})$, 
	the vertex~$r_X(K_k)$ dominates $J_k$. Otherwise, suppose that $J_k \subseteq X$. Let~$x$ be 
	any vertex in~$J_k$, we will show that $u$ dominates~$x$. 
	Observe that the vertices $u$, $r_Y(K_k)$, $x$ and $x_R$ belong to~$G_p$. 
	Moreover, as $u \in V(K_k)$, we have that $u \leq r_Y(K_k)$. By definition of~$G_p$, we also have 
	that~$x \leq x_R$. Observe that, by Lemma~\ref{lem:lorna}~\ref{it:important}), $u$ is adjacent to $x_R$. 
	Furthermore, by~$\ref{prop:extra2})$, the vertex $r_Y(K_k)$ is adjacent to $x$. Thus, 
	since the ordering of $V(G_p)$ is strong, $x$ is adjacent to~$u$. 
	Therefore, $P$ dominates $B$. 
	\qed
\end{proof}

The previous theorem implies the following result.

\begin{corollary} \label{cor:biconvexbound}
	If $G$ is a biconvex graph, then $\pe(G) \leq 1$.
\end{corollary}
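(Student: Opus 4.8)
The plan is to obtain the bound directly from Theorem~\ref{thm:dompath-biconvex}, which already carries all the structural content; the corollary is then purely definitional. First I would recall the equivalence set up in the introduction: a path $P$ is a (1-)\emph{dominating path} of $G$ exactly when $\ecc_G(P) \leq 1$, and more generally $G$ admits an $\ell$-dominating path if and only if $\pe(G) \leq \ell$. Hence "dominating path" and "path of eccentricity at most one" are the same notion, and exhibiting one such path for an arbitrary biconvex graph is precisely what the bound $\pe(G) \leq 1$ requires.

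Given this equivalence, the argument proceeds in three short steps. By Theorem~\ref{thm:dompath-biconvex}, for every connected biconvex graph $G$ the path $P$ returned by Algorithm~\ref{alg:dom-path-biconvex} is a dominating path, so every vertex of $G$ lies within distance one of $V(P)$; that is, $\ecc_G(P) \leq 1$. Since $\pe(G)$ is defined as the minimum of $\ecc_G(\cdot)$ over all paths of $G$, the single path $P$ already witnesses $\pe(G) \leq \ecc_G(P) \leq 1$, which is the claimed inequality. No case analysis or optimization over paths is needed, because any dominating path certifies the bound.

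The main obstacle lies not in the corollary but in the theorem it invokes: the existence of the dominating path rests on the complete bipartite decomposition of the bipartite permutation subgraph $G_p$, the careful treatment of the terminal blocks $K_1$ and $K_k$ together with the vertices $\{x_1,\ldots,x_{L-1}\}$ and $\{x_{R+1},\ldots,x_n\}$ lying outside $G_p$, and the monotonicity of neighborhoods guaranteed by Lemma~\ref{lem:lorna}~\ref{it:important}). Once Theorem~\ref{thm:dompath-biconvex} is established, the corollary demands nothing further, so I would keep its proof to a single sentence citing the theorem and the definitional equivalence between dominating paths and path eccentricity.
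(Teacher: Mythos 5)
Your proposal is correct and matches the paper exactly: the paper derives Corollary~\ref{cor:biconvexbound} immediately from Theorem~\ref{thm:dompath-biconvex} with no further argument, relying on the same definitional equivalence between a dominating path and a path of eccentricity at most one. Nothing is missing.
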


To conclude, we observe that this bound is tight even if $G$ 
is $k$-connected. For this, consider the complete bipartite 
graph $K_{k, k + 2}$. Observe that $K_{k, k + 2}$ is $k$-connected 
and also biconvex. Moreover, by Proposition~\ref{prop:certificate}, 
we have~$\pe(K_{k, k + 2}) = 1$.

%\input{bpg.tex}

%\section{Biconvex graphs} \label{sec:biconvex}

%\input{biconvex.tex}

%\section{Convex graphs} \label{sec:convex}

\section{Upper bounds for $\pe(G)$ in general and $k$-connected graphs} \label{sec:kconn}

In this section, we show an upper bound for $\pe(G)$
when $G$ is a general (and connected) graph, and when $G$ is $k$-connected with $k\geq 2$.
Given a path $P$ and two vertices $x$ and $y$ in $P$, we denote by $P[x,y]$ the subpath of $P$ with extremes $x$ and $y$.
Note that $d_P(x,y)=|P[x,y]|$.
We begin by showing a result for any connected graph.
Recall that $\ecc_G(P)= \max \{d_G(u,V(P)): u \in V(G)\}$
and $\pe(G) = \min \{\ecc_G(P):~P \text{ is a path in } G\}$.

\begin{theorem} \label{th:k=1}
For any connected graph $G$ on $n$ vertices, $\pe(G) \leq \frac{n-1}{3}$. Moreover, this bound is tight.
\end{theorem}
\begin{proof}
Let $P$ be a longest path of $G$ with length $L$.
Let $v$ be a vertex of $G$ with $d_G(v,V(P))=\ecc_G(P)$, 
and let $Q$ be a shortest path from $v$ to $P$.
Then, as $|V(P) \cup V(Q)|\leq n$, we have
\begin{equation}\label{eq:Qleqn-L-1}
|Q| \leq n-L-1.
\end{equation}
Also, as $P$ is a longest path, we have
\begin{equation}\label{eq:QleqL2}
|Q| \leq L/2.
\end{equation}
Indeed, otherwise we can join $Q$ with a subpath of $P$ of length at least $L/2$, obtaining a path with length more than $L$, a contradiction.

From \eqref{eq:Qleqn-L-1} and \eqref{eq:QleqL2}, we have
$$
\pe(G) \leq \ecc_G(P) 
= |Q| 
\leq \min \{n-L-1,L/2 \} 
\leq \frac{2}{3}\cdot (n-L-1) +  \frac{1}{3}\cdot \frac{L}{2} 
= \frac{n-1}{3},
$$
where the last inequality follows
because $\min\{a,b\} \leq \alpha\cdot a + (1-\alpha)\cdot b$
for any $a,b \in \mathbb{R}$, with $\alpha \in [0,1]$.
To show that this bound is tight, consider the following family of graphs: start with a 
$K_{1,3}$ and subdivide each edge the same arbitrary number of times.
\qed
\end{proof}

Next we show how to improve
Theorem \ref{th:k=1} when $G$ is $k$-connected and $k\geq 2$.
The proof idea is similar: we will obtain two inequalities (as inequalities (1) and (2) in the proof of Theorem \ref{th:k=1}), and then combine them to obtain the wanted upper bound.
We begin by showing a proposition that is valid for any path $P$.
This is the analogous of equation (1).
% in the proof of Theorem \ref{th:k=1}.

\begin{proposition}\label{prop:eccn-P}
Let $P$ be a path in a $k$-connected graph $G$ with $n$ vertices. Then $\ecc(P)\leq (n-|P|+k-2)/k$.
\end{proposition}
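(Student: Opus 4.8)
The plan is to bound the eccentricity of $P$ by relating the distance from any vertex to $P$ with the number of vertices that must lie on disjoint short paths reaching $P$. Let $\ell := \ecc(P)$ and pick a vertex $v$ with $d_G(v, V(P)) = \ell$. The key idea is to exploit $k$-connectivity via a Menger-type argument: since $G$ is $k$-connected and $v \notin V(P)$, there exist $k$ internally disjoint paths from $v$ to the path $P$ (more precisely, $k$ paths from $v$ to distinct vertices of $P$, sharing only $v$). Each such path has length at least $\ell$, because every vertex of $P$ is at distance at least $\ell$ from $v$ by choice of $v$.

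\medskip
\noindent First I would set up the fan of $k$ internally disjoint $v$--$P$ paths $Q_1, \ldots, Q_k$, guaranteed by a fan version of Menger's theorem (this requires $|V(P)| \geq k$, which I would need to check or handle as a boundary case). These paths meet $P$ at $k$ distinct vertices and are pairwise disjoint except at $v$. Each $Q_j$ has length at least $\ell$, so it contains at least $\ell$ vertices not on $P$ (namely, all its internal vertices together with $v$, counted carefully). The plan is to count the vertices outside $P$: the $k$ paths contribute at least $k\ell - (k-1)$ vertices outside $V(P)$, since together they use $v$ once and each contributes at least $\ell - 1$ further internal vertices on top of $v$. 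Combining this with the obvious bound that the number of vertices off $P$ is $n - |V(P)|$ should yield
\[
k\ell - (k-1) \leq n - |V(P)|,
\]
and rearranging gives $\ell \leq (n - |V(P)| + k - 1)/k$. Since the target is $(n - |P| + k - 2)/k$ and $|P|$ denotes the length $=|V(P)|-1$ here, I would reconcile the indexing carefully: using $|P| = |V(P)| - 1$ turns $n - |V(P)|$ into $n - |P| - 1$, and then the bound reads $\ell \leq (n - |P| - 1 + k - 1)/k = (n - |P| + k - 2)/k$, matching the claim exactly.

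\medskip
\noindent The main obstacle I anticipate is the careful vertex-counting in the fan: I must ensure the $k$ disjoint paths really contribute $k\ell - (k-1)$ distinct vertices off $P$, which hinges on each path having length at least $\ell$ and on the paths being internally disjoint so no off-$P$ vertex is double-counted except $v$. A subtle point is whether the endpoints of $P$ cause trouble, since a $v$--$P$ path could arrive at an endpoint of $P$; this is fine for the count but I would verify the distance bound $\geq \ell$ still holds (it does, as endpoints are vertices of $P$). A second delicate issue is the existence of the fan when $|V(P)|$ is small: if $|V(P)| < k$ the fan theorem does not directly give $k$ paths to distinct vertices, so I would treat that regime separately, observing that a small $|P|$ forces $n$ small enough (or the bound trivial enough) that the inequality still holds. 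Once the counting inequality is established, the final arithmetic is routine, and the result follows for use in the subsequent $k$-connected bound for $\pe(G)$.
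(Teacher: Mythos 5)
Your main argument---the fan from a farthest vertex $v$, each of the $k$ internally disjoint $v$--$P$ paths having length at least $\ell=\ecc(P)$ and hence contributing, all together, at least $k(\ell-1)+1$ distinct vertices off $P$---is correct, and the arithmetic does reconcile with the stated bound. The genuine gap is the regime $|V(P)|<k$ that you flag but do not actually resolve. The proposition is stated for an \emph{arbitrary} path $P$ in a $k$-connected graph, so $|V(P)|<k$ really can occur (e.g.\ $P$ a single vertex or edge in a large $k$-connected graph), and your proposed dismissal---that a small $|P|$ ``forces $n$ small enough (or the bound trivial enough)''---is false: $n$ can be arbitrarily large while $P$ is short, and the bound $(n-|P|+k-2)/k\approx n/k$ is then far from trivial. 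Falling back on a fan of only $m=|V(P)|<k$ paths gives roughly $\ell\leq n/m$, strictly weaker than what is required. The gap is repairable within your framework: replace the fan lemma by Menger applied to $v$ and the vertex obtained by contracting $V(P)$; any set separating $v$ from $V(P)$ in $G$ is a separator of $G$ and so has size at least $k$, which yields $k$ paths from $v$ to $V(P)$ pairwise sharing only $v$ outside $V(P)$, and your count then goes through with no hypothesis on $|V(P)|$.

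For comparison, the paper's proof takes a different and simpler route: it layers $V(G)$ by distance to $P$, setting $S_i=\{u\in V(G): d_G(u,V(P))=i\}$, observes that each intermediate layer $S_i$ with $1\leq i<\ecc(P)$ is a separator of $G$ and hence has at least $k$ vertices, and compares $n$ with $|V(P)|+k(\ecc(P)-1)+1$. This argument is uniform in $|V(P)|$ and needs neither the fan lemma nor any extremal property of $P$; the paper reserves the fan-based reasoning for Lemma~\ref{lema:eccP2}, where $P$ is a longest path and $|V(P)|\geq k$ is automatic.
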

\begin{proof}
For any integer $i \geq 0$, let $S_i=\{v \in V(G):d_G(P,v)=i\}$.
Note that $S_i=\emptyset$ for any $i> \ecc(P)$.
Note also that, as $G$ is $k$-connected,
for any $i$ with $1 \leq i< \ecc(P)$, we have $|S_i|\geq k$.
Indeed, otherwise $S_i$ separates $G$.
Hence, as $S_0=V(P)$, we have
$$
k(\ecc(P)-1) \leq \sum_{i=1}^{\ecc(P)-1}|S_i|\leq n-|V(P)|-1= n-|P|-2,
$$
and the proof follows.
\qed
\end{proof}

The next lemma is the analogous of equation (2) in the proof of Theorem \ref{th:k=1}.
Its proof is given after Theorem \ref{theo:k-conn-2}.

\begin{lemma} \label{lema:eccP2}
Let $P$ be a longest path in a $k$-connected graph $G$. If $k\geq 2$, then $\ecc(P)\leq (|P|+2)/(2k+2)$.
\end{lemma}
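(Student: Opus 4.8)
The plan is to mirror the proof of Theorem~\ref{th:k=1}, using a longest path $P$ from $a$ to $b$ and a vertex $v$ with $d_G(v,V(P))=\ecc(P)=:e$, but to exploit $k$-connectivity to reroute $P$ through $v$ in $k$ different places rather than one. Write $L=|P|$, and note that $d_G(v,u)\ge e$ for every $u\in V(P)$, so in particular $d_G(v,a),d_G(v,b)\ge e$. Since $G$ has minimum degree at least $k$ we have $|V(P)|\ge k$, so the Fan Lemma (Menger) gives $k$ internally disjoint paths $Q_0,\dots,Q_{k-1}$ from $v$ to distinct vertices $t_0<t_1<\dots<t_{k-1}$ of $P$, each meeting $P$ only at its endpoint $t_i$ and satisfying $|Q_i|\ge d_G(v,V(P))=e$. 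These $k$ vertices split $P$ into $k+1$ consecutive gaps $g_0=d_P(a,t_0)$, $g_i=d_P(t_{i-1},t_i)$ for $1\le i\le k-1$, and $g_k=d_P(t_{k-1},b)$, with $L=\sum_{i=0}^{k}g_i$.

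For the $k-1$ inner gaps I would use the rerouting idea from Theorem~\ref{th:k=1}: for $1\le i\le k-1$, the concatenation $P[a,t_{i-1}]\cdot Q_{i-1}\cdot Q_i\cdot P[t_i,b]$ (traversing $Q_{i-1}$ from $t_{i-1}$ to $v$ and $Q_i$ from $v$ to $t_i$) is a path, since its pieces overlap only at $v$ and at the relevant $t_j$. As $P$ is longest, its length is at most $L$, which forces $g_i\ge |Q_{i-1}|+|Q_i|\ge 2e$. The same comparison applied to $Q_0\cdot P[t_0,b]$ and to the symmetric path at the other end already yields the weaker bounds $g_0,g_k\ge e$, and in passing rules out $t_0=a$ or $t_{k-1}=b$.

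Summing only these bounds gives $L\ge 2(k-1)e+2e=2ke$, which is \emph{not} yet enough: the target $\ecc(P)\le(|P|+2)/(2k+2)$ is equivalent to $L\ge(2k+2)e-2$, so I still need an extra $2e-2$. The plan is to squeeze this out of the two end gaps, strengthening $g_0,g_k\ge e$ to $g_0,g_k\ge 2e-1$; this is precisely the step that needs $k\ge 2$. The idea is that, because $G$ is $k$-connected with $k\ge 2$, one can find an $a$--$v$ path $R$ that is essentially disjoint from $V(P[t_0,b])$ and from $Q_0$; then $R\cdot Q_0\cdot P[t_0,b]$ is a path of length at least $|R|+|Q_0|+d_P(t_0,b)\ge 2e+(L-g_0)$, and comparing with $L$ gives $g_0\ge 2e$ (an unavoidable single-vertex overlap in the worst case relaxes this to $2e-1$, which matches the extremal family). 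A symmetric argument handles $g_k$.

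The main obstacle is exactly this last step: guaranteeing the rerouting $a$--$v$ path with the required disjointness from $P[t_0,b]$ and from $Q_0$. A generic $a$--$v$ path may re-enter $P$ to the right of $t_0$ or meet $Q_0$, so I expect to need a careful case analysis on its first such intersection (or a direct Menger argument after deleting a bounded separator), and this is where the precise constant—and the loss of a single unit, giving $2e-1$ rather than $2e$—is decided. Once both end gaps are bounded by $2e-1$, summation yields $L=\sum_{i=0}^{k}g_i\ge (2e-1)+(k-1)\cdot 2e+(2e-1)=(2k+2)e-2$, that is $\ecc(P)\le(|P|+2)/(2k+2)$, completing the proof.
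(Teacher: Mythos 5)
Your overall architecture is the same as the paper's: apply the Fan Lemma to get $k$ internally disjoint paths from an outside vertex to $P$, split $P$ into the $k+1$ gaps determined by their feet, bound each inner gap below by rerouting through the outside vertex, and sum. Your inner-gap bound $g_i\ge |Q_{i-1}|+|Q_i|$ is exactly the paper's Claim~2, and your target for the end gaps ($\ge 2e-1$ each, so that $L\ge(2k+2)e-2$) is numerically the right one. But the proof has a genuine gap precisely where you flag "the main obstacle": you never establish the end-gap bound, and the route you sketch for it is not sound. You propose to find an $a$--$v$ path $R$ essentially disjoint from $P[t_0,b]$ and from $Q_0$; no such path need exist. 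For instance, every $a$--$v$ path may be forced to pass through vertices of $P[t_0,b]$ (the Fan Lemma gives you $k$ disjoint paths from $v$ to the \emph{set} $V(P)$, not a path from $v$ to the specific vertex $a$ avoiding the rest of $P$), and 2-connectivity alone does not let you prescribe where a path meets $P$. Deleting a separator or doing a case analysis on the first intersection, as you suggest, does not obviously recover the constant, and as written the argument only yields $g_0,g_k\ge e$, i.e.\ $L\ge 2ke$, which is strictly weaker than the lemma.

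The paper closes this exact gap without introducing any new path: to bound the end gap $d_P(p_1,v_1)$ it uses the path $\bigl(P-P[p_1,v_1]\bigr)\cdot Q_1\cdot(Q_k-v_k)$, i.e.\ walk $P$ from $p_2$ back to $v_1$, go out to $x$ along $Q_1$, and then travel along $Q_k$ stopping one vertex short of its foot $v_k$ (which already lies on the retained part of $P$). Maximality of $P$ then gives $d_P(p_1,v_1)\ge |Q_1|+|Q_k|-1$, and symmetrically for the other end; this is where $k\ge 2$ is used, since it needs two distinct fan paths. Summing with the inner-gap bounds gives $L\ge 2\sum_i|Q_i|+|Q_1|+|Q_k|-2\ge(2k+2)\min_i|Q_i|-2$, and the pigeonhole step finishes. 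If you replace your unproven rerouting claim with this "reuse the farthest fan path minus its endpoint" trick, your write-up becomes essentially the paper's proof.
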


With that lemma at hand, we show the main result of this section.

\begin{theorem} \label{theo:k-conn-2}
Let $G$ be a $k$-connected graph of order $n$ such that $k \geq 2$. 
Then $\pe(G)\leq \frac{n+k}{3k+2}$.
\end{theorem}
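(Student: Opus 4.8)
The plan is to mimic exactly the structure of the proof of Theorem~\ref{th:k=1}, but now combining the two inequalities supplied by Proposition~\ref{prop:eccn-P} and Lemma~\ref{lema:eccP2} instead of the two inequalities used there. First I would fix $P$ to be a \emph{longest} path of $G$, since Lemma~\ref{lema:eccP2} requires this, and write $\ell := |P|$ for its length. From Proposition~\ref{prop:eccn-P} I obtain the first bound
\[
	\ecc_G(P) \leq \frac{n - \ell + k - 2}{k},
\]
and from Lemma~\ref{lema:eccP2} (valid because $k \geq 2$) I obtain the second bound
\[
	\ecc_G(P) \leq \frac{\ell + 2}{2k + 2}.
\]
Both upper bounds hold simultaneously for this single path $P$, so $\ecc_G(P)$ is at most the minimum of the two right-hand sides, and hence $\pe(G) \leq \ecc_G(P)$ is as well.

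Next I would combine these two quantities using the same averaging trick as in Theorem~\ref{th:k=1}: for any $a, b \in \R$ and any $\alpha \in [0,1]$ we have $\min\{a,b\} \leq \alpha a + (1-\alpha) b$. The idea is to pick the weight $\alpha$ that makes the dependence on $\ell$ cancel, so that the resulting bound depends only on $n$ and $k$. Writing $a = (n - \ell + k - 2)/k$ and $b = (\ell + 2)/(2k+2)$, the coefficient of $\ell$ in $\alpha a + (1-\alpha) b$ is $-\alpha/k + (1-\alpha)/(2k+2)$; setting this to zero gives the right choice of $\alpha$. The main (and essentially only) obstacle is this bit of bookkeeping: solving for $\alpha$ and then simplifying $\alpha a + (1-\alpha) b$ to confirm it collapses to exactly $\frac{n+k}{3k+2}$.

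Carrying out that computation, $-\alpha/k + (1-\alpha)/(2k+2) = 0$ forces $\alpha(2k+2) = k(1-\alpha)$, i.e. $\alpha(3k+2) = k$, so $\alpha = \frac{k}{3k+2}$ and $1 - \alpha = \frac{2k+2}{3k+2}$. Substituting back, the $\ell$-terms vanish and the constant terms combine to
\[
	\alpha \cdot \frac{n + k - 2}{k} + (1-\alpha) \cdot \frac{2}{2k+2}
	= \frac{n+k-2}{3k+2} + \frac{2}{3k+2}
	= \frac{n+k}{3k+2},
\]
which is precisely the claimed bound. Thus
\[
	\pe(G) \leq \ecc_G(P) \leq \min\left\{ \frac{n-\ell+k-2}{k},\ \frac{\ell+2}{2k+2} \right\} \leq \frac{n+k}{3k+2}.
\]
I would then (as in Theorem~\ref{th:k=1}) exhibit a family of $k$-connected graphs attaining equality to justify the tightness claim, presumably by taking $k$ disjoint long paths joined at both ends through small complete-bipartite-like gadgets so that the two inequalities are simultaneously tight; verifying tightness is the one place where some care is needed, but the upper-bound argument itself is a direct weighted combination and should go through cleanly once $\alpha$ is identified.
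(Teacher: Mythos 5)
Your proposal is correct and follows essentially the same route as the paper: fix a longest path, invoke Proposition~\ref{prop:eccn-P} and Lemma~\ref{lema:eccP2}, and combine the two bounds with the weighted-minimum inequality, choosing $\alpha$ so that the dependence on the path length cancels. Your explicit weights $\alpha = \frac{k}{3k+2}$ and $1-\alpha = \frac{2k+2}{3k+2}$ are the right ones and the algebra checks out (the paper's displayed convex combination carries spurious factors of $L$ in the weights, but simplifies to the same conclusion), so the argument goes through as you describe.
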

\begin{proof}
Let $P$ be a longest path of $G$ with length $L$.
By Proposition \ref{prop:eccn-P} and Lemma \ref{lema:eccP2}, we have
\begin{eqnarray*}
\pe(G) &\leq& \ecc(P) \\
 &\leq& \min \left\{\frac{(n-L-2+k)}{k},\frac{L+2}{2k+2}  \right\} \\
&\leq& \frac{kL}{(2k+2+kL)}\cdot \frac{(n-L-2+k)}{k}
+\frac{2k+2}{(2k+2+kL)}\cdot \frac{L+2}{2k+2} \\
&=& \frac{n+k}{3k+2},
\end{eqnarray*}
where the third inequality follows because $\min\{a,b\} \leq \alpha\cdot a + (1-\alpha)\cdot b$
for any $a,b \in \mathbb{R}$, with $\alpha \in [0,1]$.
%and the proof follows.
\qed
\end{proof}

We now proceed to the proof of Lemma \ref{lema:eccP2}.
For this, we use the following proposition, known as ``Fan lemma''.

\begin{proposition}[{\cite[Proposition 9.5]{BondyM08}}]\label{prop:fan-lemma}
 Let~$G$ be a~$k$-connected graph. Let~${v \in V(G)}$
and~$S \subseteq V(G) \setminus \{v\}$.
If~$|S|\geq k$ then there exists a set of~$k$ internally vertex-disjoint paths from $v$ to $S$.
Moreover, every two paths in this set have~$\{v\}$
as their intersection. 
\end{proposition}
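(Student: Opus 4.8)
The plan is to derive this statement (the classical ``Fan lemma'') from Menger's theorem via the standard apex-vertex reduction. First I would build an auxiliary graph $G'$ from $G$ by adding a single new vertex $w$ and joining it to every vertex of $S$. Since $v \notin S$, the vertices $v$ and $w$ are nonadjacent in $G'$, so the vertex version of Menger's theorem applies to the pair $(v, w)$: the maximum number of internally vertex-disjoint $v$--$w$ paths in $G'$ equals the minimum size of a $v$--$w$ separator $T \subseteq V(G') \setminus \{v, w\} = V(G) \setminus \{v\}$.

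The heart of the argument is to show that every such separator $T$ satisfies $|T| \geq k$. I would argue by contradiction, assuming $|T| < k$. As $|S| \geq k$, the set $S \setminus T$ is nonempty, so I may pick $s \in S \setminus T$. Because the only neighbors of $w$ in $G'$ are the vertices of $S$, and $s \in S \setminus T$ remains adjacent to $w$ in $G' - T$, the separator $T$ must keep $v$ and $s$ in distinct components of $G' - T$. Deleting the vertex $w$ cannot merge components, so $v$ and $s$ also lie in distinct components of $G - T$; in particular $G - T$ is disconnected, and $v \neq s$ since $v \notin S$. Hence $T$ is a vertex cut of $G$ of size strictly less than $k$, contradicting the $k$-connectivity of $G$. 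Therefore every $v$--$w$ separator in $G'$ has size at least $k$, and Menger's theorem yields a set $\mathcal{P}$ of $k$ internally vertex-disjoint $v$--$w$ paths in $G'$.

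It then remains to transfer $\mathcal{P}$ back to $G$. Each path in $\mathcal{P}$ meets $w$ only as an endpoint; since $v$ and $w$ are nonadjacent, every such path has length at least two, so the vertex preceding $w$ is an internal vertex lying in $N_{G'}(w) = S$. Deleting $w$ from each path of $\mathcal{P}$ thus produces $k$ paths from $v$ to $S$ in $G$, each ending at a vertex of $S$. Internal disjointness is inherited: the paths of $\mathcal{P}$ pairwise intersect only in $\{v, w\}$, so after removing $w$ they pairwise intersect only in $\{v\}$, which also forces their terminal vertices in $S$ to be distinct (a shared terminal vertex would lie in the intersection $\{v\}$, impossible as $v \notin S$). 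This is exactly the desired family of $k$ paths. If one additionally wants each path to meet $S$ only at its terminal vertex, I would truncate each path at its first vertex of $S$; truncation preserves internal disjointness and keeps the terminal vertices distinct.

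I expect the main obstacle to be the correct handling of the separator argument in the second step, namely verifying that a small $v$--$w$ separator in $G'$ genuinely certifies a small vertex cut of $G$: one must ensure $T \subseteq V(G) \setminus \{v\}$, that some $s \in S \setminus T$ survives, and that $G - T$ is truly disconnected rather than trivially degenerate, all while invoking the form of Menger's theorem appropriate to the nonadjacent pair $(v, w)$.
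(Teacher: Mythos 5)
Your proof is correct, and it is essentially the same argument as in the paper's cited source: the paper states this fan lemma without proof, citing Bondy and Murty (Proposition 9.5), whose proof is precisely the apex-vertex reduction you carry out — add a new vertex $w$ joined to all of $S$, verify via the $k$-connectivity of $G$ (using $|S|\geq k$) that every $v$--$w$ separator has size at least $k$, apply Menger's theorem to the nonadjacent pair $(v,w)$, and delete $w$ (truncating at the first vertex of $S$ if needed). Your handling of the separator step, including the checks that $S\setminus T\neq\emptyset$ and that removing $w$ cannot merge components, is sound, so there is nothing to fix.
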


\begin{proof}(Of Lemma \ref{lema:eccP2}.)
Let $P$ be a longest path in $G$. Let $L=|P|$.
If $L=n-1$, then $\ecc(P)=0$ and the proof follows.
Otherwise, let $x$ be an arbitrary vertex in $V(G)\setminus V(P)$.
As~$G$ is $k$-connected, we have that $|P|\geq k$ (since any extreme of $P$ has degree at least $k$).
Thus, by Proposition \ref{prop:fan-lemma}, there exists a set $\{Q_1,Q_2,\ldots,Q_k\}$
of internally vertex-disjoint paths from $x$ to $P$.
Let $\{v_1,v_2,\ldots, v_k\}$ be their correspondent extremes in $P$,
and let $p_1$ and $p_2$ be the extremes of $P$.
Moreover, we may assume that 
$d_P(p_1,v_i) < d_P(p_1,v_{i+1})$ holds
for $1 \leq i \leq k-1$.

\begin{claim}
$d_P(p_1,v_1) \geq |Q_1|+|Q_k|-1$ and
$d_P(v_k,p_2) \geq |Q_k|+|Q_1|-1$.
\end{claim}
\begin{proof} (Of Claim 1)
Note that $(P-P[p_1,v_1])\cdot Q_1 \cdot (Q_k-v_k)$ and
$(P-P[v_k,p_2])\cdot Q_k \cdot (Q_1-v_1)$ are paths.
As $P$ is a longest path, we have
$|P|-d_P(p_1,v_1)+|Q_1|+|Q_k|-1\leq |P|,$
which proofs the first part of the claim.
The proof of the second part follows by a similar argument.
\qed
\end{proof}

\begin{claim}
For $1 \leq i \leq k-1$, we have $d_P(v_i,v_{i+1}) \geq |Q_i|+|Q_{i+1}|$.
\end{claim}
\begin{proof} (Of Claim 2)
Note that, for every such $i$, we have that 
$P[v_1,v_i] \cdot Q_i \cdot Q_{i+1} \cdot P[v_{i+1},v_{k}]$ is a path. As
$P$ is a longest path, the proof follows.
\qed
\end{proof}

By Claims 1 and 2, we have

\begin{eqnarray*}
L &=& |E(P)| \\\\
  &=& d_P(p_1,v_1)+ \sum_{i=1}^{k-1}d_P(v_i,v_{i+1})+
  d_P(v_k,p_2) \\
  &\geq& |Q_1|+|Q_k|-1 + \sum_{i=1}^{k-1}(|Q_i|+|Q_{i+1}|)+
  |Q_1|+|Q_k|-1 \\
  &=&2\sum_{i=1}^{k}|Q_i|+|Q_1|+|Q_k|-2.
\end{eqnarray*}
Hence, by pigeonhole principle, there exits a path $Q_i$ with $|Q_i|\leq \frac{L+2}{2k+2}$ and the proof follows.
\qed
\end{proof}

Theorem \ref{theo:k-conn-2} is tight in one sense, as the graph $K_{k,k+2}$ is $k$-connected 
with eccentricity one. Hence, for any~$k \geq 2$, there exists a graph that makes 
Theorem \ref{theo:k-conn-2} tight. We can ask for an stronger result: is it true that, 
for any $\ell \geq 1$ and $k \geq 2$, there exists a graph $G$ such 
that $\pe(G) = \ell = (n + k) / (3k + 2)$? We can answer this question for $k = 2$, 
as the following result shows.

\begin{corollary}
	For each $\ell \geq 1$, there exists a 2-connected graph $G$ such that $\pe(G)= \ell = \frac{n+2}{8}$.
\end{corollary}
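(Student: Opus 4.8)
The corollary asserts that for each $\ell \geq 1$ there is a $2$-connected graph $G$ with $\pe(G) = \ell$ and, simultaneously, $\ell = (n+2)/8$ where $n = |V(G)|$. Setting $k = 2$ in Theorem~\ref{theo:k-conn-2} gives the bound $\pe(G) \leq (n+2)/8$, so the content of the corollary is to exhibit, for each target $\ell$, a $2$-connected graph on exactly $n = 8\ell - 2$ vertices whose path eccentricity is exactly $\ell$. Thus the task splits cleanly into a construction plus a matching lower bound, since the upper bound $\pe(G) \leq \ell$ is automatic from Theorem~\ref{theo:k-conn-2} once $n = 8\ell - 2$.

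**Overall plan.** The plan is to build an explicit $2$-connected graph that plays, for $k = 2$, the role that the subdivided $K_{1,3}$ plays in the tightness argument of Theorem~\ref{th:k=1}. The natural candidate is a ``theta-like'' or subdivided-multigraph gadget: take a small $2$-connected core (for instance, three internally disjoint paths joining two hub vertices, i.e.\ a generalized theta graph $\Theta$, or a cycle with several long pendant ``ears'') so that every longest path is forced to miss at least one long branch, and arrange the branch lengths so that the farthest vertex from any path sits at distance exactly $\ell$. First I would fix the parameters: I expect each of the three (or more) arms to have length roughly $2\ell$, with the two hub vertices providing the $2$-connectivity, and I would solve for the exact arm lengths so that the vertex count is exactly $8\ell - 2$ and the eccentricity-of-longest-path equals $\ell$. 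Because the equality-analysis in the proof of Theorem~\ref{theo:k-conn-2} uses $k = 2$ internally disjoint paths and the pigeonhole step, the extremal graph should make all the intermediate inequalities (Claims~1 and~2 and the pigeonhole bound in Lemma~\ref{lema:eccP2}, together with Proposition~\ref{prop:eccn-P}) tight simultaneously, which dictates the symmetry of the construction.

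**Lower bound via the certificate proposition.** For the matching lower bound $\pe(G) \geq \ell$ I would invoke Proposition~\ref{prop:certificate}. The strategy is to choose a separator $S \subseteq V(G)$ with $|\mathcal{C}(G - S)| \geq |S| + 2$ consisting of the two hub vertices (so $|S| = 2$ and we need at least four components after deletion) or a slightly larger cut, designed so that after removing $S$ the graph falls apart into several long arms, each of which contains a vertex at distance exactly $\ell$ from $S$. Then Proposition~\ref{prop:certificate} yields
\[
\pe(G) \geq \min\{ \ecc_G(S, C) : C \in \mathcal{C}(G - S) \} = \ell,
\]
provided I have arranged every arm so that its deepest vertex is at distance exactly $\ell$ from $S$. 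This is exactly how the tightness of the convex and $k$-connected bounds was certified earlier in the paper, so the mechanism is already in place; the only new work is matching the arm depths to the separator. After establishing both bounds I would verify the arithmetic $n = 8\ell - 2$ and confirm $2$-connectivity (no cut vertex) directly from the theta/ear structure.

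**Main obstacle.** The hard part will be the simultaneous bookkeeping: I must choose the number of arms and their exact lengths so that three constraints hold at once — the graph is genuinely $2$-connected (ruling out degenerate arms of length one that would create cut vertices or violate the distance-to-$S$ count), the total vertex count equals precisely $8\ell - 2$ rather than merely $O(\ell)$, and the longest path provably realizes eccentricity exactly $\ell$ (so that the chain of inequalities $\min\{(n-L+k-2)/k,\,(L+2)/(2k+2)\} = (n+k)/(3k+2)$ collapses to equality with $k=2$). The delicate point is ensuring the \emph{longest} path, not merely some path, leaves one arm far from it; I expect to argue that by symmetry any longest path must traverse all but one arm, so its eccentricity is governed by the untraversed arm's half-length, and then tune that length to $\ell$. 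Once the gadget's parameters are pinned down, both the upper bound (from Theorem~\ref{theo:k-conn-2}) and the lower bound (from Proposition~\ref{prop:certificate}) should close the gap to give $\pe(G) = \ell$.
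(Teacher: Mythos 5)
Your plan is essentially the paper's proof: the paper takes $K_{2,4}$ with the two hubs $v_1,v_2$ on the small side and subdivides every edge $\ell-1$ times, which is exactly your generalized theta graph with four arms of length $2\ell$ between two hubs (giving $n=4(2\ell-1)+2=8\ell-2$), then applies Proposition~\ref{prop:certificate} with $S=\{v_1,v_2\}$ for the lower bound $\pe(G)\geq\ell$ and Theorem~\ref{theo:k-conn-2} for the matching upper bound. Your worry in the last paragraph about analyzing longest paths directly is unnecessary, as you yourself note earlier that the upper bound is automatic once $n=8\ell-2$; you only need four arms (not three) so that $|\mathcal{C}(G-S)|\geq|S|+2$.
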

\begin{proof}
Let $H$ be a graph isomorphic to $K_{2,4}$ and
let 
\[
	V(H)=\{v_1,v_2,u_1,u_2,u_3,u_4\}
\]
with $v_1$ and $v_2$ in the same side of the bipartition of $H$.
Let $G$ be the graph obtained by subdividing
$\ell - 1$ times every edge of $H$.
Note that, if $G$ has $n$ vertices, then
$\ell = \frac{n + 2}{8}$. Moreover, $G$ is 2-connected.
Now, observe that $G' = G - \{v_1,v_2\}$ has exactly four connected
components which are paths of length $2(\ell - 1)$ with one end adjacent 
to $v_1$ and the other adjacent to $v_2$ in $G$. 
Thus, by Proposition~\ref{prop:certificate}, we have that $\pe(G) \geq \ell$.
Finally, by Theorem~\ref{theo:k-conn-2}, we conclude that~$\pe(G) = \ell = \frac{n+2}{8}$.
\qed
\end{proof}

For $k \geq 3$ and $\ell \geq 2$, we can only give a partial answer to this question. 
To this end, we make the following definition. Let $k$ be an integer, and let $G_1$ 
and $G_2$ be two graphs with at least $k$ vertices. We denote by $G_1 \overset{k}{\equiv} G_2$ the 
following graph, called a \textit{$k$-matching} of $G_1$ and $G_2$.
\[
	\begin{array}{rcl}
		V( G_1 \overset{k}{\equiv} G_2 ) & = & V(G_1) \cup V(G_2), \\ 
		E( G_1 \overset{k}{\equiv} G_2 ) & = & E(G_1) \cup E(G_2) \cup \{ u_iv_i : 1 \leq i \leq k \}, \\ 
	\end{array}
\]
where $u_1, u_2, \ldots, u_k$ (resp. $v_1, v_2, \ldots, v_k$) are distinct vertices 
of $G_1$ (resp. $G_2$). In other words, $G_1 \overset{k}{\equiv} G_2$ is obtained 
from the union of $G_1$ and $G_2$ by linking its vertices by a matching of size $k$. 
Observe that, a $k$-matching of $G_1$ and $G_2$ is not unique in general. Moreover, 
if $G_1$ and $G_2$ are $k$-connected, then $G_1 \overset{k}{\equiv} G_2$ is 
also $k$-connected. When we consider the sequence of graphs $G_1, G_2, \ldots, G_m$, 
the graph $G_1 \overset{k}{\equiv} G_2 \overset{k}{\equiv}  \ldots \overset{k}{\equiv} G_m$ 
is obtained from the union of $G_1, G_2, \ldots, G_m$ by linking $G_i$ to $G_{i+1}$ by 
a matching of size $k$, for $i = 1, \ldots, m - 1$. We show in Figure~\ref{fig:kmatch} an 
example of the graph $K_4 \overset{2}{\equiv} K_3 \overset{2}{\equiv} K_2$.

\begin{figure}
	\centering
	\includegraphics[scale=.5]{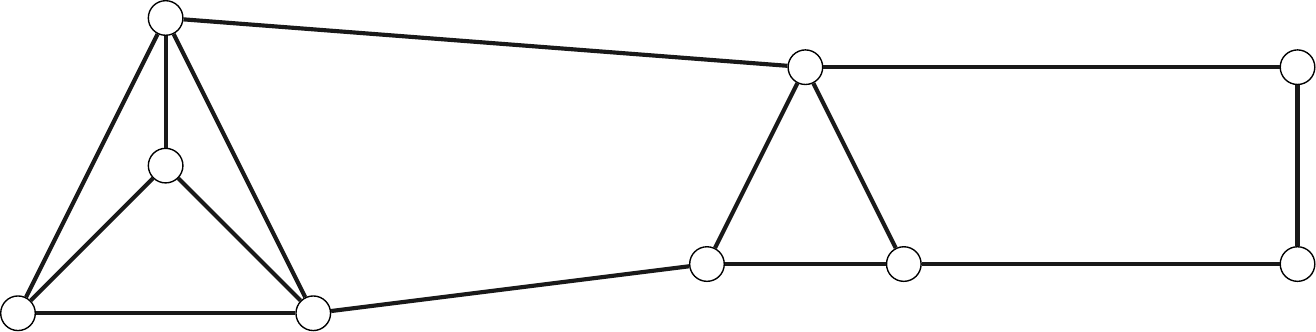}
	\caption{A graph $K_4 \overset{2}{\equiv} K_3 \overset{2}{\equiv} K_2$.}
	\label{fig:kmatch}
\end{figure}

In what follows, we denote by $P^k_\ell$ a graph isomorphic 
to $G_1 \overset{k}{\equiv} G_2 \overset{k}{\equiv}  \ldots \overset{k}{\equiv} G_\ell$, 
where $G_\ell \cong K_{k + 1}$ and~$G_i \cong K_k$, for $1 \leq i < \ell$. Note 
that $P^k_\ell$ can be regarded as a path of length $\ell - 1$, where each vertex represents  
either $K_k$ or $K_{k+1}$, and each edge represents a matching of size $k$. In that 
sense, we will call $G_1$ (resp. $G_{\ell}$) the \textit{tail} (resp. \textit{head}) 
of $P^k_\ell$. Finally, let $H^k_\ell$ be the graph obtained as follows. 
First, consider $k + 2$ copies of $P^{k}_\ell$, say $H_1, \ldots, H_{k+2}$. 
Let $T_i = \{ v^j_i : 1 \leq j \leq k\}$ be the set of vertices in the tail 
of $H_i$, for~$i = 1, \ldots, k + 2$. Moreover, 
let $V^j = \{ v^j_1, v^j_2, \ldots, v^j_{k+2} \}$, for $j = 1, \ldots, k$.
We obtain $H^k_\ell$ by identifying each $V^j$ as a single vertex. 
We call the set of vertices resulting from such procedure, the \textit{base} 
of $H^k_\ell$. We show in Figure~\ref{fig:Hconst} an example of $P^3_2$ and $H^3_2$, 
where the base of $H^3_2$ is depicted by full vertices. 

\begin{figure}[bth]
	\centering
	\includegraphics[scale=.35]{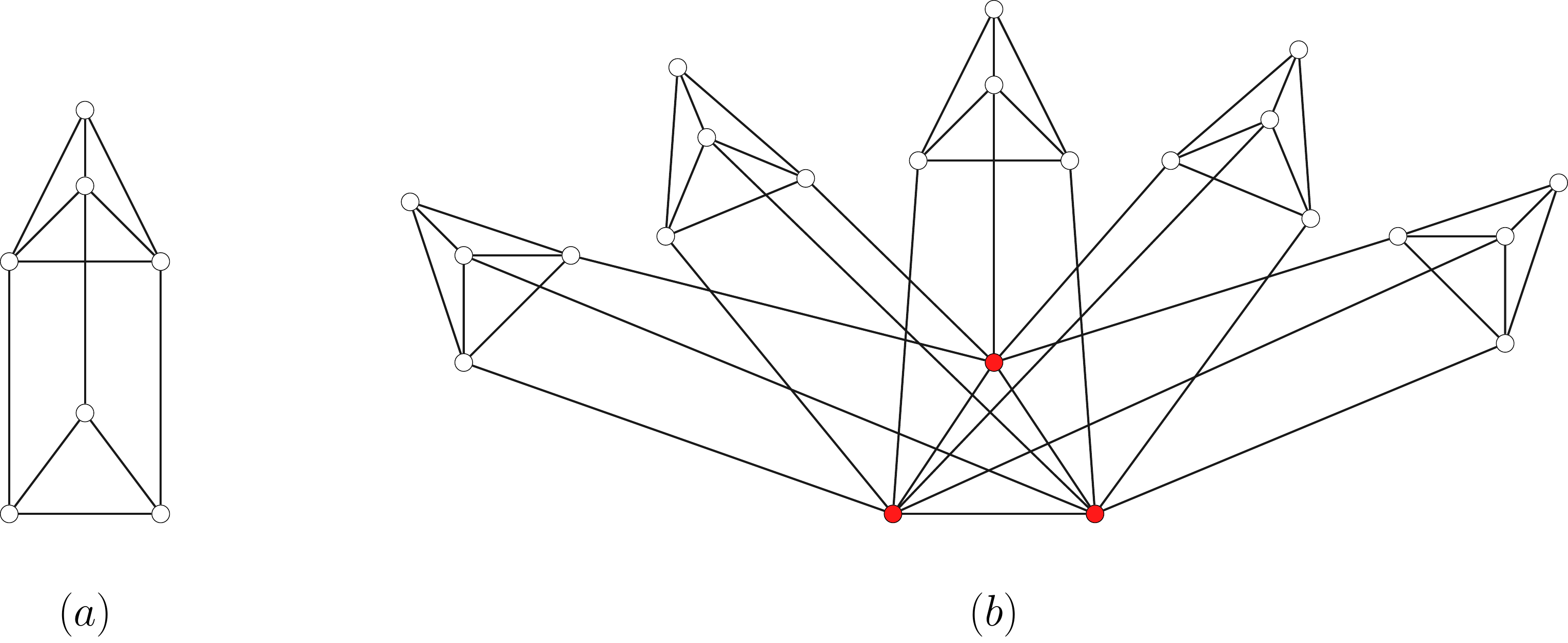}
	\caption{$(a)$ $P^3_2$; and $(b)$ $H^3_2$.}
	\label{fig:Hconst}
\end{figure}

Now, we will show that $\pe(H^k_{\ell}) = \ell$. Let $S \subseteq V(H^k_\ell)$ be 
the base of $H^k_\ell$. Since $S$ induces a complete subgraph, there is a path, 
say $Q$, that contains every vertex of $S$. To find an upper bound for 
the eccentricity of $Q$, first, we study the eccentricity of a vertex $u$ that 
belongs to the tail of $P^k_\ell$. Since the head of $P^k_\ell$ is isomorphic 
to $K_{k+1}$, there must exists a vertex $w$, in the head of $P^k_\ell$, such 
that $d_{P^k_\ell}(u, w) = \ell$, and thus, $\ecc_{P^k_\ell}(u) = \ell$.  
As $S$ is the tail of every induced $P^k_\ell$ in $H^k_\ell$, the previous argument 
implies that~$\pe(H^k_\ell) \leq \ell$. Finally, the inequality $\pe(H^k_\ell) \geq \ell$ 
follows from Proposition~\ref{prop:certificate}.

In what follows, we analyse for which cases $H^k_\ell$ is a tight example.
By Theorem~\ref{theo:k-conn-2}, we must have that
\begin{equation} \label{eq:tight}
	\ell = \pe(H^k_\ell) \leq \frac{|V(H^k_\ell)| + k}{3k + 2} < \ell + 1.
\end{equation}
On the other hand,
\[
	\begin{array}{rcl}
		|V(H^k_\ell)| & = & (k + 2)( (\ell - 1)k + 1 ) + k \\
		\ & = & (k + 2)k(\ell - 1) + 2k + 2. \\
	\end{array}
\]
Replacing the last equality in inequality~(\ref{eq:tight}), and multiplying both 
sides by $(3k + 2)$, we obtain the following $$(k + 2)(\ell - 1)k + 3k + 2 < (\ell + 1)(3k + 2).$$  	  
The last inequality is equivalent to
\begin{equation} \label{eq:final}
	\ell k  + 2(\ell + k) - (\ell - 1)k^2 > 0. 
\end{equation}
Now, we analyse inequality~(\ref{eq:final}) for some values 
of $k$. If $k = 3$, we obtain $-4\ell + 15 > 0$ which 
holds for~$\ell = 1, 2, 3$. Similarly, if $k = 4$, we have 
that $-10 \ell + 24 > 0$ which holds for~$\ell= 1, 2$. Finally, 
for $k \geq 5$, inequality~(\ref{eq:final}) only holds for $\ell = 1$.
Therefore, for $k \geq 3$, there exists a graph $G$ that attains the bound given in 
Theorem~\ref{theo:k-conn-2} in the following cases: $k = 3$ 
and $\ell = 1, 2, 3$, and $k = 4$ and $\ell = 1, 2$, where $\ell = \pe(G)$. 
We observe that $\pe(H^k_\ell)$ is equal to the diameter of $P^k_{\ell}$. Thus, 
it is natural to try improving the previous construction by considering a 
$k$-connected graph with larger diameter. Unfortunately, as noted by 
Caccetta \& Smyth~\cite{CaccettaS92} (see Theorem~1), the graph $P^k_\ell$ 
has the largest diameter among the $k$-connected graphs with the same number 
of vertices.

We conclude this section observing that, if we are interested in dominating paths, 
that is, a path in which every vertex in the graph is at distance at most one, then 
Theorem \ref{theo:k-conn-2} give us the following corollary.

\begin{corollary} \label{cor:k-conn-dom}
If $G$ is a $k$-connected graph with $k>\frac{n-4}{5}$ then $G$ has a dominating path.
\end{corollary}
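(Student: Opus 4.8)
The plan is to read the corollary off directly from Theorem~\ref{theo:k-conn-2} together with the fact that $\pe(G)$ takes integer values. First I would recall that a dominating path is precisely a path $P$ with $\ecc_G(P)\leq 1$, so that $G$ admits a dominating path if and only if $\pe(G)\leq 1$. Since the statement is presented as a consequence of Theorem~\ref{theo:k-conn-2}, I work under the hypothesis $k\geq 2$, which lets me apply that theorem to obtain the bound $\pe(G)\leq \frac{n+k}{3k+2}$.

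The crux is a one-line manipulation of the hypothesis. I would verify that $k>\frac{n-4}{5}$ is equivalent to $\frac{n+k}{3k+2}<2$: multiplying through by $3k+2>0$, the right-hand inequality becomes $n+k<2(3k+2)=6k+4$, i.e.\ $n-4<5k$, which is exactly the assumed condition. Combined with Theorem~\ref{theo:k-conn-2}, this gives $\pe(G)<2$.

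To finish, I would invoke integrality. Because $\pe(G)=\min\{\ecc_G(P):P\text{ a path}\}$ is a minimum of maxima of graph distances, it is a nonnegative integer; hence $\pe(G)<2$ forces $\pe(G)\leq 1$, and $G$ has a dominating path.

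The algebra is routine, so there is no genuine obstacle, only two bookkeeping points worth stating explicitly. The first is that Theorem~\ref{theo:k-conn-2} requires $k\geq 2$, a restriction the corollary inherits; indeed for $k=1$ the conclusion can fail (a spider with three legs of length two satisfies $k>\frac{n-4}{5}$ yet has $\pe=2$), so one should not drop this assumption. The second is that the passage from the strict inequality $\pe(G)<2$ to $\pe(G)\leq 1$ rests on the integrality of $\pe$, which is the only nonalgebraic ingredient and the step I would be most careful to state.
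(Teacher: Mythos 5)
Your proof is correct and is exactly the derivation the paper intends (the paper states the corollary without an explicit proof, as an immediate consequence of Theorem~\ref{theo:k-conn-2}): the hypothesis $k>\frac{n-4}{5}$ is equivalent to $\frac{n+k}{3k+2}<2$, and integrality of $\pe(G)$ gives $\pe(G)\leq 1$. Your side remark that the statement implicitly inherits the assumption $k\geq 2$ from Theorem~\ref{theo:k-conn-2}, witnessed by the subdivided $K_{1,3}$ for $k=1$, is a valid and worthwhile observation about the corollary as literally stated.
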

%\begin{proof}
%By Theorem \ref{theo:k-conn}, $\pe(G) \leq \frac{n+k}{3k+2}<2$.
%\end{proof}

Faudree et. al.~\cite{FaudreeGJW17} showed 
that for a $k$-connected graph $G$, if $\delta(G) > \frac{n-2}{k+2}$ then $G$ has a dominating path.
As $\delta(G) \geq k$ for $k$-connected graphs, this implies that for any $k > \sqrt{n-1}-1$, any $k$-connected graph has a dominating path.
Thus, Corollary \ref{cor:k-conn-dom}
improves Faudree's result for $n\leq 21$. Note that
Corollary \ref{cor:k-conn-dom} gave us a result for any $k$-connected graph, 
independent of its minimum degree.
%More recently, Faudree et. al.~\cite{FaudreeFGHJ18} showed 
%sufficient conditions for the existence of a dominating 
%path ($\pe(G) \leq 1$), depending on the sum of degrees of 
%independent vertices of $G$.

\section{Longest paths as central paths} \label{sec:longest}

In the previous section, we showed upper bounds 
for $\pe(G)$ based on the eccentricity of a longest path 
of a graph $G$. Since a longest path maximizes the number 
of vertices in it, intuitively it seems a good 
approximation for a central path of $G$. Moreover, 
in graphs that contain a Hamiltonian path, longest 
paths and central paths coincide. This led us to study 
the relationship between a longest path and a central path 
on a graph. In particular, we are interested in 
investigating which structural properties ensure the 
following.
\begin{property} \label{prop:longest}
	Every longest path is a central path of the graph.
\end{property}
In what follows, we tackle this question for different 
classes of graphs.

\subsection{Trees and planar graphs}

First, we consider the case in which the input graph 
is a tree. In this case, every longest path contains the
center(s) of a tree~\cite{ChartrandLZ16}. Cockayne et. al.~\cite{CockayneHH81} 
showed an analogous result for the case of minimum-length central paths.

\begin{theorem}[Cockayne et. al., 1981]
	Let $P^*$ be a central path of minimum 
	length in a tree $T$. Then~$P^*$ contains the center(s) of $T$. 
	Moreover, $P^*$ is unique.
\end{theorem}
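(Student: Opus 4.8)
The plan is to tie a minimum-length central path $P^*$ to a longest path, so that the quoted fact (every longest path contains the center(s)) can be exploited. Write $\ell=\ecc_T(P^*)=\pe(T)$ and let $s,t$ be the endpoints of $P^*$. The engine of the whole argument is the identity $\diam(T)=|P^*|+2\ell$. For the upper bound I would project the two endpoints $d_0,d_L$ of a diametral path onto $P^*$; since $P^*$ is $\ell$-dominating each endpoint is within $\ell$ of $P^*$, and the two projections lie on $P^*$ at distance at most $|P^*|$, so by the triangle inequality $\diam(T)=d_T(d_0,d_L)\le \ell+|P^*|+\ell$. For the matching lower bound I would use minimality of the length: deleting the endpoint $s$ must destroy $\ell$-domination, so some vertex $w_s$ has $d_T(w_s,P^*\setminus\{s\})>\ell\ge d_T(w_s,P^*)$; its nearest path vertex is then $s$ and the next-nearest is the path-neighbor of $s$ at distance $d_T(w_s,s)+1$, forcing $d_T(w_s,s)=\ell$. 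Symmetrically there is such a $w_t$ beyond $t$. Appending the $s$--$w_s$ and $t$--$w_t$ shortest paths to $P^*$ gives a simple path $D^*$ of length $|P^*|+2\ell$ (the two appended branches are vertex-disjoint because $s\ne t$), so $\diam(T)\ge |P^*|+2\ell$. Hence equality holds, $D^*$ is a longest path, and $P^*$ is exactly $D^*$ with $\ell$ vertices trimmed from each end.

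For center-containment, $D^*$ is a longest path, so it contains the center(s) of $T$. On a longest path of length $\diam(T)$ the center(s) lie at distance $\lfloor \diam(T)/2\rfloor$ from one endpoint, and in the odd case also $\lceil \diam(T)/2\rceil$. Using $\diam(T)=|P^*|+2\ell$, both distances equal $\ell+\lfloor |P^*|/2\rfloor$ and $\ell+\lceil |P^*|/2\rceil$, hence lie between $\ell$ and $\diam(T)-\ell$; that is, they fall in the middle segment of $D^*$, which is precisely $P^*$. Therefore the center(s) lie on $P^*$.

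For uniqueness I would give an intrinsic description of $P^*$ that never mentions the chosen path. For a vertex $v$, call a component of $T-v$ a \emph{branch} and its \emph{depth} the largest distance in $T$ from $v$ to that component. I claim the internal vertices of $P^*$ are exactly $I=\{v:\ v \text{ has at least two branches of depth } \ge \ell+1\}$. Indeed, if $v$ is internal then its branches toward $s$ and toward $t$ contain $w_s$ and $w_t$ and so have depth $\ge \ell+1$; conversely, if $v$ had two deep branches but were off $P^*$, or were an endpoint, then a deepest vertex of a branch meeting $P^*$ only at $v$ would sit at distance $\ge \ell+1$ from $P^*$, contradicting $\ecc_T(P^*)=\ell$. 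Since $I$ depends only on $T$ and $\ell$, every minimum-length central path has the same internal vertices, and as a vertex set of a path in a tree determines that path, $I$ is a fixed subpath. The endpoints are then forced: each extreme vertex $s'$ of $I$ has exactly two deep branches, one containing $I\setminus\{s'\}$ and one of depth exactly $\ell+1$, and $s$ is the unique neighbor of $s'$ in the latter; symmetrically for $t$. The degenerate cases $I=\emptyset$ (so $|P^*|\le 1$) I would settle directly: if $\diam(T)$ is even then $P^*$ is the single, hence unique, center, and if $\diam(T)$ is odd then $P^*$ is the edge joining the two centers, which is the middle edge of every longest path.

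The main obstacle is the uniqueness argument, specifically verifying that the intrinsic set $I$ coincides with the internal vertices of \emph{every} optimal path and then pinning down the two endpoints without ambiguity (ruling out a vertex with three or more deep branches, and treating the short-path cases where $I$ is empty or a single vertex). By contrast, the identity $\diam(T)=|P^*|+2\ell$ is a routine projection-plus-minimality computation, and once it is in hand the center-containment is immediate from the quoted fact about longest paths.
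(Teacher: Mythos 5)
The paper does not prove this statement: it is quoted from Cockayne, Hedetniemi and Hedlund (1981) and used as a black box, so there is no in-paper argument to compare yours against. Judged on its own terms, your proposal is essentially correct. The identity $\diam(T)=|P^*|+2\ell$ is the right engine and your derivation is sound: the projection argument gives the upper bound, and for the lower bound the gate property of trees (every path from $w_s$ to $V(P^*)$ passes through the unique nearest vertex) does force $d_T(w_s,s)=\ell$ with the $w_s$--$s$ path meeting $P^*$ only at $s$; hence $D^*$ is a genuine diametral path and the center(s), sitting at positions $\ell+\lfloor |P^*|/2\rfloor$ and $\ell+\lceil |P^*|/2\rceil$ from $w_s$, lie on $P^*$. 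The intrinsic set $I$ does coincide with the internal vertices of \emph{every} minimum-length central path (both directions of your equivalence go through: a deep branch disjoint from $P^*$ would contain a vertex at distance more than $\ell$ from $P^*$), and each extreme vertex of $I$ has exactly two deep branches, which pins down the endpoints.

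Three loose ends need to be written out, all of which you flagged yourself. First, when $|P^*|=0$ the endpoint-deletion step produces nothing and $s=t$ breaks the disjointness of the two appended branches; you need the separate observation that a single-vertex optimal path forces $\ell=\rad(T)$ and a unique center (if there were two adjacent centers $c_1,c_2$, the edge $c_1c_2$ would have eccentricity $\rad(T)-1<\ell$, contradicting $\ell=\pe(T)$), after which $\diam(T)=2\ell$ and the rest goes through. Second, when $|I|=1$ both deep branches of the unique vertex of $I$ have depth exactly $\ell+1$ and the clause ``the one containing $I\setminus\{s'\}$'' is vacuous; restate the endpoint rule as ``one endpoint is the unique neighbor of $s'$ inside each deep branch not containing $I\setminus\{s'\}$,'' which is unambiguous because a component of $T-s'$ has exactly one vertex adjacent to $s'$. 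Third, you use without proof that the center(s) of a tree occupy the middle position(s) of any diametral path; this is classical ($\rad(T)=\lceil\diam(T)/2\rceil$ in trees) but deserves a sentence. None of these is a failing step, so I would classify the proposal as a correct proof sketch rather than a gapped one.
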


Let $P$ be a longest path of $T$. Next, we show 
that $P$ contains $P^*$.

\begin{theorem}
	Let $T$ be a tree. Let $P$ be a longest path of $T$. 
	If $P^*$ is a central path of minimum length, then~$V(P^*) \subseteq V(P)$.
\end{theorem}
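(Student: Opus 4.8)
The plan is to analyze the \emph{hanging subtrees} of $P^*$ and reduce the statement to a diameter computation. Write $P^* = \langle w_0, \dots, w_s\rangle$ and $\ell = \ecc_T(P^*) = \pe(T)$. For each $w_i$ let $D_i$ be the largest distance from $w_i$ to a vertex whose nearest vertex on $P^*$ is $w_i$ (the depth of the subtree hanging off $w_i$). Since in a tree every vertex has a unique projection onto $P^*$, we have $\ell = \max_i D_i$. I would treat the degenerate case $s=0$ first: it occurs exactly when $\pe(T)=\rad(T)$ and $P^*$ is a single vertex of minimum eccentricity, i.e.\ a center, so the statement is immediate from the cited fact that every longest path contains the center(s). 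Assume henceforth $s\ge 1$.

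The first substantive step, and the main place where minimality of $P^*$ enters, is to show that the endpoints are tight: $D_0 = D_s = \ell$. Deleting $w_0$ leaves $\langle w_1,\dots,w_s\rangle$; the only vertices whose distance to the path changes are those hanging off $w_0$, whose distance increases by exactly one, so the new eccentricity equals $\max(\max_{i\ge 1}D_i,\, D_0+1)$. Because $P^*$ has minimum length among central paths, this shorter path cannot be central, so its eccentricity exceeds $\ell$; as $\max_{i\ge 1}D_i\le \ell$, this forces $D_0+1>\ell$, i.e.\ $D_0=\ell$. The symmetric argument at the other end gives $D_s=\ell$. In particular there are vertices $u_0,u_s$ with $d_T(u_0,w_0)=d_T(u_s,w_s)=\ell$ lying in the branches beyond $w_0$ and $w_s$.

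Next I would pin down the length $L$ of a longest path. Concatenating the segment $u_0\to w_0$, the subpath $w_0\to w_s$ of $P^*$, and the segment $w_s\to u_s$ yields a simple path of length $2\ell+s$, so $L\ge 2\ell+s$. For the reverse inequality, take any path $Q$ and project its two endpoints onto $P^*$, obtaining indices $i\le j$; the unique tree path between the endpoints traverses $w_i,\dots,w_j$, whence $|Q|\le D_i+(j-i)+D_j\le \ell+s+\ell$, using $D_i,D_j\le\ell$ and $j-i\le s$ (when both endpoints project to the same $w_i$ one has $|Q|\le 2D_i\le 2\ell$). Hence $L=2\ell+s$.

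Finally, apply this to a longest path $P$: equality $|P|=2\ell+s$ forces $D_i=D_j=\ell$ and $j-i=s$, and $j-i=s$ with $0\le i\le j\le s$ forces $i=0,\ j=s$. Thus $P$ runs from a vertex beyond $w_0$ through $w_0,w_1,\dots,w_s$ to a vertex beyond $w_s$, so $V(P^*)\subseteq V(P)$. I expect the delicate point to be the endpoint-tightness step---correctly tracking how the eccentricity changes when an endpoint is deleted and invoking minimality---after which the diameter computation and the equality analysis are routine.
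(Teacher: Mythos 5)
Your proof is correct, and it takes a genuinely different route from the paper's. The paper argues by contradiction: it uses the fact that both $P$ and $P^*$ contain the center(s) of $T$ (so they intersect in a common subpath), then, assuming some extreme $a$ of $P^*$ lies off $P$, it extracts from the minimality of $P^*$ a ``private'' far vertex $a'$ with $d_T(a,a')=\ecc(P^*)$ and $d_T(b,a')>\ecc(P^*)$ for all other $b\in V(P^*)$, and finally builds a path longer than $P$ by rerouting one end of $P$ toward $a'$. You instead work with the hanging-subtree decomposition along $P^*$: minimality gives $D_0=D_s=\ell$ (your endpoint-deletion computation is the exact counterpart of the paper's vertex $a'$, which is precisely a depth-$\ell$ vertex in the subtree hanging off an endpoint), and from there you derive the exact identity $L=2\ell+s$ for the longest-path length and read off that equality forces a longest path to traverse $w_0,\dots,w_s$. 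Your version buys more: it is quantitative (it determines $L$ and in fact characterizes all longest paths as deepest-branch--$P^*$--deepest-branch concatenations), and it only needs the external center-containment fact in the degenerate case $s=0$, whereas the paper leans on that fact from the start and on a more delicate exchange argument. The paper's proof, on the other hand, is shorter once the center facts are granted. All the steps you flag as delicate check out: the eccentricity of $\langle w_1,\dots,w_s\rangle$ is indeed $\max\bigl(\max_{i\ge 1}D_i,\,D_0+1\bigr)$ because in a tree each vertex has a unique projection onto $P^*$ and only the vertices projecting to $w_0$ are affected, and the upper bound $|Q|\le D_i+(j-i)+D_j$ is justified because the components of $T-E(P^*)$ are exactly the fibers of the projection.
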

\begin{proof}
	Since $P$ and $P^*$ contain the center(s) of the tree, we have that 
	\[
		X := V(P) \cap V(P^*) \neq \emptyset.
	\]
	Furthermore, since $T$ is a tree, the set $X$ induces a subpath of $P$ and $P^*$, 
	say $X = \langle x_1, x_2, \ldots, x_k \rangle$. Suppose by contradiction 
	that $V(P^*) \nsubseteq V(P)$. Then, there exists an extreme of $P^*$, say $a$, that 
	is not in $P$. Let $u$ be an extreme of $P$ that is nearest to $a$. Without loss of 
	generality, suppose that $u$ is the closest extreme to~$x_1$. 

	By the minimality of $P^*$, there exists a vertex $a' \notin V(P^*)$ 
	such that 

	\begin{equation} \label{eq:minimal} 
		\begin{array}{rcl}
			d_T(a, a') & = & \ecc(P^*), \\
			d_T(b, a') & > & \ecc(P^*), \text{ for } b \in V(P^*) \setminus \{ a \}.
		\end{array}
	\end{equation}
	Moreover, observe that $a' \notin V(P)$, otherwise we have
	\[
		\min\{ d_T(x_1, a') , d_T(x_k, a') \} < d_T(a, a'), 
	\]
	which contradicts~$(\ref{eq:minimal})$.
	Thus, $a' \in V(T) \setminus (V(P^*) \cup V(P))$. Moreover, 
	the path linking $a$ to $a'$, in $T$, does not contain any vertex of $P$ or $P^*$ 
	(except $a$). Otherwise, there exists a vertex, in $P^*$, that is closer to $a'$ 
	than~$a$. Let $Q$ be the path between the vertices $x_1$ and $a'$. Also, 
	let $R$ be the path obtained from $P$ by replacing the path from $u$ 
	to $x_1$ by $Q$. Since $|R| \leq |P|$, we have that
	\[
		d_T(u, x_1) \geq d_T(a', x_1) > d_T(a, a') = \ecc(P^*) \geq d_T(u, x_1), 
	\]
	where the second inequality follows from~$(\ref{eq:minimal})$. Therefore, 
	we have a contradiction. \qed
\end{proof}

The previous result implies the following.

\begin{corollary}\label{cor:longest-tree}
	If $G$ is a tree, a longest path is a central path of $G$.
\end{corollary}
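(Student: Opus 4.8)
The plan is to derive the corollary as an essentially immediate consequence of the theorem immediately preceding it, together with the cited fact that every longest path in a tree contains the center(s). Let $G$ be a tree, let $P$ be a longest path of $G$, and let $P^*$ be a central path of minimum length (which exists and is unique by the Cockayne et. al. theorem). The preceding theorem gives $V(P^*) \subseteq V(P)$. So the main task is to upgrade the containment $V(P^*) \subseteq V(P)$ into the statement that $P$ itself is a central path, i.e. that $\ecc_G(P) = \pe(G)$.

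First I would observe that in a tree, if $A$ and $B$ are vertex sets with $A \subseteq B$, then every vertex is at least as close to $B$ as to $A$, so $\ecc_G(B) \leq \ecc_G(A)$; domination can only improve when we add vertices to the dominating set. Applying this with $A = V(P^*)$ and $B = V(P)$ yields
\[
\ecc_G(P) \leq \ecc_G(P^*) = \pe(G),
\]
where the final equality holds because $P^*$ is a central path. Since $\pe(G) = \min\{\ecc_G(P') : P' \text{ is a path in } G\}$ is by definition a lower bound for the eccentricity of any path, we also have $\ecc_G(P) \geq \pe(G)$. Combining the two inequalities gives $\ecc_G(P) = \pe(G)$, which is exactly the assertion that the longest path $P$ is a central path.

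There is essentially no serious obstacle here, since all the heavy lifting has already been done in the preceding theorem; the only point that warrants a line of justification is the monotonicity of eccentricity under the set containment $V(P^*) \subseteq V(P)$, which I would either state explicitly as above or fold into a single sentence. One should also note that the argument is uniform over all longest paths $P$: the preceding theorem applies to an \emph{arbitrary} longest path, so the conclusion is that \emph{every} longest path is a central path, matching the phrasing of the corollary. I do not expect any edge cases (e.g. a single-vertex tree or a path graph) to cause trouble, since in those degenerate situations $P$ is Hamiltonian and the equality $\ecc_G(P) = \pe(G) = 0$ holds trivially.
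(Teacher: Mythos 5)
Your proposal is correct and matches the paper, which simply states that the corollary is implied by the preceding theorem; you merely spell out the (graph-independent) monotonicity of eccentricity under vertex-set containment that makes the implication immediate.
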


A natural direction is to extend the previous result for classes of 
graphs that contain trees. A minimal such superclass is \textit{cacti}. That is, 
the connected graphs where any two cycles have at most one vertex in 
common. As shown by Figure~\ref{fig:planar}~$(a)$, in this class, 
there are graphs such that a longest path is not a central path. In this figure, 
a longest path is composed of the vertices in the two cycles, which 
has eccentricity three, whereas the path eccentricity of the graph is two.

\begin{figure}[tbh]
	\centering
	\includegraphics[scale=.3]{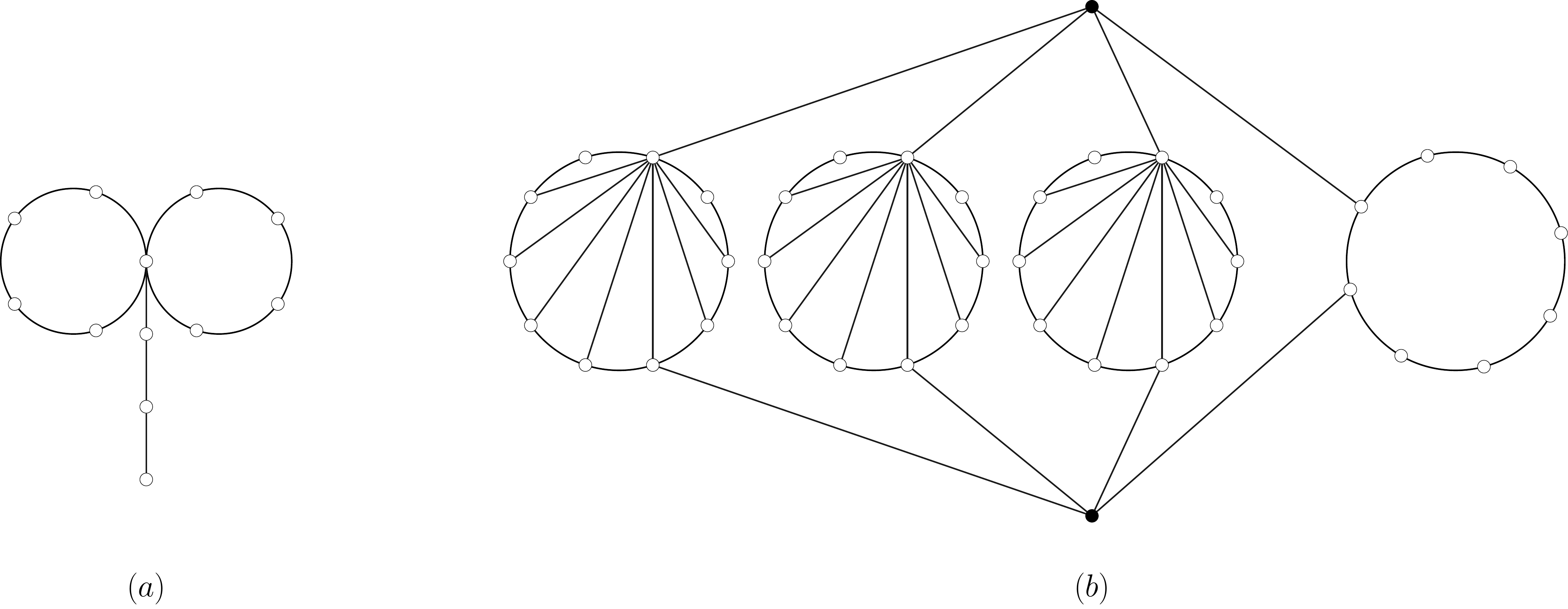}
	\caption{Two graphs that do not satisfy Property~\ref{prop:longest}: $(a)$ a cactus; 
	and $(b)$ a $2$-connected planar graph.}
	\label{fig:planar}
\end{figure}

Now, if we impose that a cactus is $2$-connected, then 
Property~\ref{prop:longest} holds, as these graphs are precisely cycles. 
Moreover, this additional constraint also ensures this property 
for a broader class of graphs. We say that a graph is \textit{outerplanar} if 
it can be embedded in the plane such that all its vertices belong to the outer 
face (and thus every cactus is outerplanar). Sys\l o~\cite{Syslo79} showed 
that $2$-connected outerplanar graphs are Hamiltonian, and therefore satisfy 
Property~\ref{prop:longest}. 

On the contrary, when we consider $2$-connected 
planar graphs, Property~\ref{prop:longest} does not hold, as shown by 
Figure~\ref{fig:planar}~$(b)$. In this figure, if we remove the black vertices, 
the resulting graph has four components: three having ten vertices and one having 
eight vertices. Clearly, every longest path is composed of the black vertices, 
and the vertices on the components with ten vertices. This implies that 
every longest path has eccentricity three. On the other hand, the path eccentricity 
of the graph is two. We can obtain such a path, by considering a path that contains 
the two black vertices, and the vertices that belong to the cycle of eight vertices. 
Tutte~\cite{Tutte56} showed that $4$-connected planar graphs are Hamiltonian, thus every 
longest path is central. Therefore, by imposing a higher vertex-connectivity, we can 
ensure Property~\ref{prop:longest} on planar graphs. We observe that, it is still open 
whether this property is also satisfied by $3$-connected planar graphs. 

\subsection{Classes with bounded path eccentricity}

In this section, we study Property~\ref{prop:longest} on classes 
of graphs that have bounded path eccentricity. 
A graph is \textit{AT-free} if for every triple of vertices,
there exists a pair of vertices in that triple, such that every path between them 
contains a neighbor of the other vertex of the triple.
Corneil et. al.~\cite{CorneilOL97} 
showed that every AT-free graph admits a dominating path.
A natural question is whether admitting a dominating path (or having bounded 
path eccentricity) implies that a longest path is also central. 
In what follows, we show a counterexample to this question. 

A graph $G$ is called an \textit{interval graph} if there exists 
a set of intervals (representing the vertices of the graph), in $\mathbb{R}$,
such that two vertices are adjacent if and only if its corresponding intervals 
intersect. Let $k \geq 3$ be an integer. Let $G$ be the interval graph 
of the following set of intervals.
\[
	\begin{array}{rcll}
		A_i & = & [1, 2^{k - i}], & \text{for } i = 1, \ldots, k, \\
		B_i & = & [2^{k - 1} + 1, 2^{k - 1} + 2^{k - i}], & \text{for } i = 1, \ldots, k, \\
		C_i & = & [2^k + i, 2^k + i + 1], & \text{for } i = 1, \ldots, k - 1, \\
		D & = & [1, 2^k + 1]. 
	\end{array}
\]
In Figure~\ref{fig:counterInt}, we show an example of $G$ for $k = 3$.
\begin{figure}
	\centering
	\includegraphics[scale=.6]{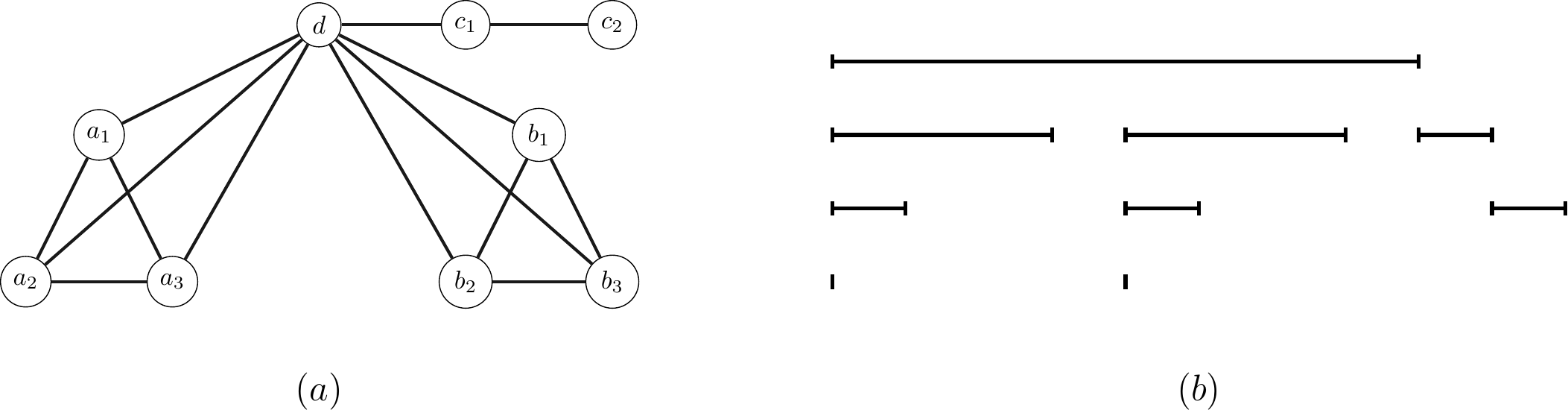}
	\caption{$(a)$ an interval graph $G$; and $(b)$ its interval representation.}
	\label{fig:counterInt}
\end{figure}

As $G$ is an interval graph and, thus an AT-free graph \cite{Lekkerkerker62}, we have 
that $\pe(G) \leq 1$. Let $a_i$ be the vertex representing interval $A_i$, 
for $i = 1, \ldots, k$. In a similar way, we define the vertices $b_i$, $c_i$ 
and $d$. Observe that $P = \langle a_k, a_{k - 1}, \ldots, a_1, d, b_1, \ldots, b_k \rangle$ 
is a longest path in $G$, and $\ecc_G(P) = k - 1$. Indeed, every longest 
path of $G$ has eccentricity $k - 1$. On the other hand, the 
path $Q = \langle a_k, \ldots, a_1, d, c_1, \ldots, c_{k - 1} \rangle$ 
has eccentricity one. Furthermore, $Q$ is a central path of $G$. 

We observe that, in the previous example, the interval representation 
of $G$ contains intervals that are properly included in other intervals. 
We say that a graph $G$ is a \textit{proper interval graph} if it admits 
an interval representation where no interval is properly contained in 
another interval. Bertossi~\cite{Bertossi83} showed that every connected 
proper interval graph has a Hamiltonian path (see Lemma 2). 
Therefore, we have the following.

\begin{corollary}\label{cor:longest-properinterval}
	If $G$ is a proper interval graph, a longest path is 
	a central path of $G$.
\end{corollary}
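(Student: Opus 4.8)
The plan is to derive Corollary~\ref{cor:longest-properinterval} directly from the result of Bertossi~\cite{Bertossi83}, which is already cited in the excerpt and states that every connected proper interval graph has a Hamiltonian path. The key observation is that a Hamiltonian path, when it exists, is automatically both a longest path and a central path of eccentricity zero, so the existence of such a path forces all longest paths to coincide with Hamiltonian paths and hence to be central.

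First I would invoke Bertossi's lemma to obtain a Hamiltonian path $H$ of $G$. Since $H$ visits every vertex of $G$, its length is $|V(G)| - 1$, which is the maximum possible length of any path; therefore $H$ is a longest path, and moreover every longest path of $G$ must also have length $|V(G)| - 1$, i.e. every longest path is Hamiltonian. Next I would observe that any Hamiltonian path $P$ satisfies $V(P) = V(G)$, so that $d_G(u, V(P)) = 0$ for every $u \in V(G)$, giving $\ecc_G(P) = 0$. Since $\pe(G) \geq 0$ trivially, this yields $\pe(G) = 0$ and shows that $P$ attains the minimum, i.e. $P$ is a central path.

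Combining these two facts closes the argument: let $P$ be an arbitrary longest path of $G$. By the first step $P$ is Hamiltonian, and by the second step every Hamiltonian path is central; hence $P$ is a central path, which is exactly Property~\ref{prop:longest} for proper interval graphs. Since the corollary is phrased as ``a longest path is a central path,'' it suffices to establish this for every longest path, which the above does.

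I do not expect any genuine obstacle here, as the corollary is essentially an immediate consequence of the Hamiltonicity result. The only point requiring a small amount of care is the logical step that the existence of a Hamiltonian path forces \emph{every} longest path (not merely some longest path) to be Hamiltonian; this follows purely from the length comparison, since no path can be longer than a Hamiltonian one and any path of maximal length must have the same length $|V(G)| - 1$. Everything else is the routine observation, already noted in the introduction of the excerpt, that a Hamiltonian path is a central path of zero eccentricity.
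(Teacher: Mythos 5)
Your argument is correct and matches the paper's: the corollary is stated there as an immediate consequence of Bertossi's result that connected proper interval graphs are traceable, with exactly the reasoning you spell out (every longest path must then be Hamiltonian, hence of eccentricity zero, hence central). No gaps.
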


Now, we focus on bipartite graphs that have bounded path eccentricity. 
In Section~\ref{sec:bcg}, we showed that convex graphs have path 
eccentricity at most two. As shown by Figure~\ref{fig:cx}, convex graphs 
do not satisfy Property~\ref{prop:longest}. Let $G = (X \cup Y, E)$ denote the graph in 
this figure. The set $X$ is depicted by the white vertices. The labels on these 
vertices represent a convex ordering of $X$. Observe that, if we remove the 
vertex $s$, the resulting graph has three components, two of size eight and one of 
size four. Thus, every longest path of~$G$ is composed of $s$ and the vertices in 
the bigger components. Then, the eccentricity of any longest path, in~$G$, is four. 
On the other hand, $\pe(G) \leq 2$ by Corollary~\ref{cor:convexbound}. 

\begin{figure}[!bt]
	\centering
	\includegraphics[scale=.5]{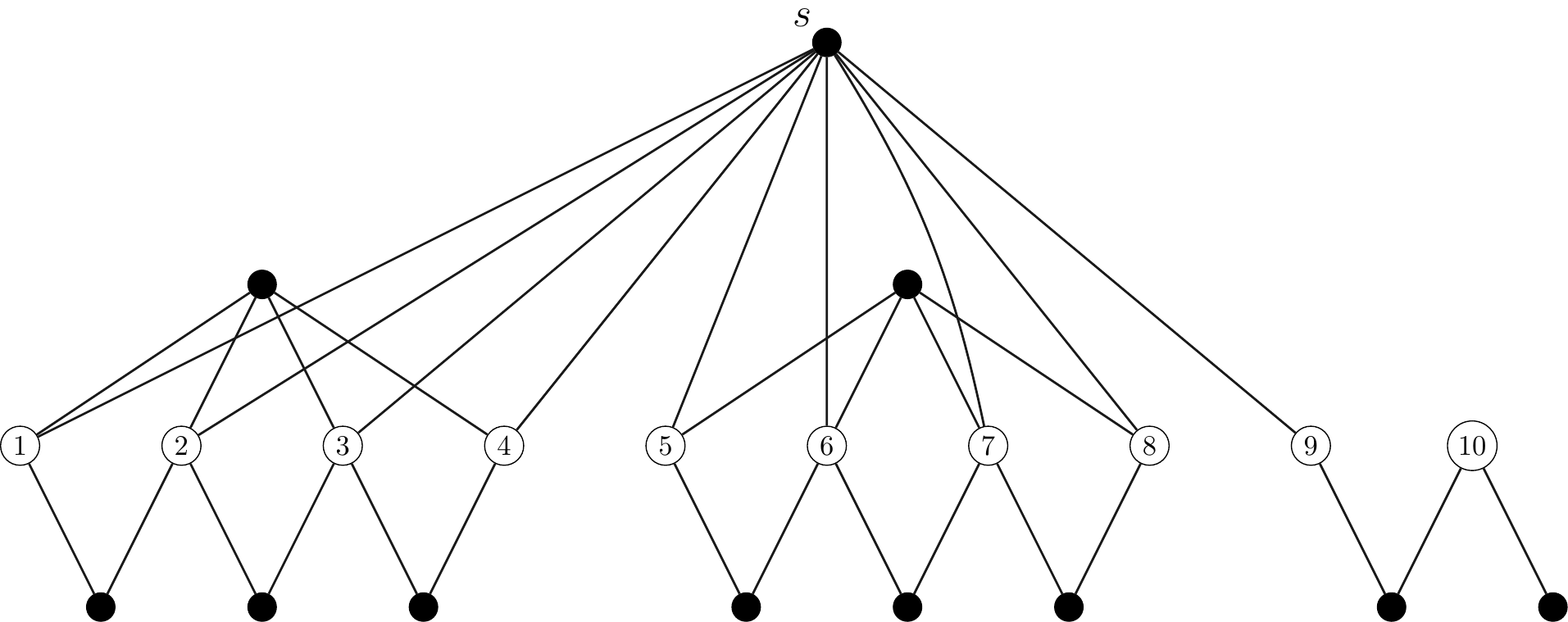}
	\caption{A convex graph that does not satisfy Property~\ref{prop:longest}.}
	\label{fig:cx}
\end{figure}

To conclude this section, we consider bipartite permutation graphs, a subclass of 
convex graphs. By Theorem~\ref{thm:dompath}, we have that $\pe(G) \leq 1$ for every 
bipartite permutation graph $G$. Now, suppose that~$G$ does not admit a Hamiltonian 
path. The following result from Cerioli et. al.~\cite{CerioliFGGL20} implies 
that $\ecc_G(P) = 1$, for every longest path $P$. 

\begin{lemma}[Cerioli et. al., 2018]
	Let $G$ be a bipartite permutation graph, and let $uv \in E(G)$. 
	Every longest path contains a vertex in $\{u, v\}$.
\end{lemma}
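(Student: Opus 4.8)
The plan is to argue by contradiction: fix an edge $uv \in E(G)$ and a longest path $P$, assume $V(P) \cap \{u,v\} = \emptyset$, and exhibit a path strictly longer than $P$. By bipartiteness I may write $u = x_a \in X$ and $v = y_b \in Y$ with $x_a y_b \in E$, and I fix a strong (hence biconvex) ordering of $V(G)$. Then $N(u) = N(x_a) \subseteq Y$ is an interval of $Y$ containing $y_b$, and $N(v) = N(y_b) \subseteq X$ is an interval of $X$ containing $x_a$.

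Two local modifications drive the argument. First, an \emph{extension} move: if an endpoint of $P$ lies in $N(u)$ (resp.\ in $N(v)$), then, since $u \notin V(P)$ (resp.\ $v \notin V(P)$), appending $u$ (resp.\ $v$) produces a path one longer than $P$, a contradiction. Hence I may assume that both endpoints of $P$ avoid $N(u) \cup N(v)$. Second, a \emph{splice} move: if $P$ uses an edge $x_c y_d$ with $x_c \in N(v)$ and $y_d \in N(u)$, then the edges $x_c y_b,\ y_b x_a,\ x_a y_d$ are all present, so replacing the single edge $x_c y_d$ by the detour $\langle x_c, v, u, y_d \rangle$ inserts the two new vertices $u, v$ and yields a path two longer than $P$, again a contradiction. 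Thus I may also assume that no edge of $P$ simultaneously meets $N(v)$ on its $X$-end and $N(u)$ on its $Y$-end.

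The heart of the proof is to show that these two assumptions are incompatible with $P$ being a longest path. First I would show, using connectivity together with the linear structure of the complete bipartite decomposition $(K_1,J_1),\dots,(K_k,J_k)$ (the edge $uv$ lies inside one block $K_i$, or between $K_i$ and $K_{i+1}$, or on a pendant $J_i$), that a longest path cannot avoid the whole region $N(u) \cup N(v)$: a path disjoint from $N(u)\cup N(v)$ could be rerouted through $K_i$, contradicting maximality. Hence $P$ meets both $N(v)$ and $N(u)$. I would then walk along $P$ and exploit the strong ordering property $(\star)$ to control the $P$-neighbors of the vertices of $P$ lying in $N(v)$: the adjacency induced by $(\star)$ is a staircase, so the neighborhoods of the $X$-vertices adjacent to $y_b$ vary monotonically around $y_b$. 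This monotonicity forces some vertex $x_c \in V(P) \cap N(v)$ to have a $P$-neighbor $y_d \in N(u)$ (enabling the splice), unless the relevant end of $P$ falls inside $N(u) \cup N(v)$ (enabling the extension). In either case one of the two moves applies, contradicting the maximality of $P$.

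The step I expect to be the main obstacle is precisely this last one: ruling out that $P$ threads past the block containing $uv$ while touching neither $N(u)$ nor $N(v)$ in a way that permits a splice or an extension. Purely local exchanges do not suffice here; the argument must invoke the global, path-like structure of a bipartite permutation graph --- either through the complete bipartite decomposition or, equivalently, through a careful case analysis of the staircase adjacency given by $(\star)$ --- to guarantee that a genuinely longest path is forced to cross the neighborhoods of the endpoints of $uv$. Formalizing the monotonicity so that a straddling edge is always produced, and checking the boundary cases where that edge degenerates to an endpoint of $P$, is where the bookkeeping concentrates.
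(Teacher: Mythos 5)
There is a genuine gap, and you have in fact flagged it yourself. The paper does not prove this lemma at all: it is imported verbatim from Cerioli et al., so there is no internal proof to match against, and your attempt has to stand on its own. Your two local moves are individually sound: the extension (an endpoint of $P$ in $N(u)\cup N(v)$ lets you append $u$ or $v$) and the splice (an edge $x_cy_d$ of $P$ with $x_c\in N(v)$ and $y_d\in N(u)$ can be replaced by the detour $\langle x_c, v, u, y_d\rangle$, which is a valid path since $u,v\notin V(P)$ and $X$, $Y$ are independent sets). But the entire content of the lemma is the claim that a longest path avoiding $\{u,v\}$ must admit one of these two moves, and that claim is only announced, not proved. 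You write ``First I would show\ldots'' and ``I would then walk along $P$\ldots'', and you concede that this is ``the main obstacle''. As written, the argument reduces the lemma to an unproven statement of essentially the same difficulty.

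Two specific points make the deferred step nontrivial. First, it is not immediate that a longest path must intersect $N(u)\cup N(v)$; the rerouting-through-$K_i$ argument you gesture at needs to produce a concrete path strictly longer than $P$, and that requires controlling how $P$ interacts with the blocks on both sides of the one containing $uv$. Second, even granting that $P$ meets both $N(v)$ and $N(u)$, the monotonicity of neighborhoods under the strong ordering does not by itself hand you a straddling edge: a vertex $x_c\in V(P)\cap N(v)$ can perfectly well have both of its $P$-neighbors in $N(x_c)\setminus N(x_a)$, since $N(x_c)$ need not be contained in $N(x_a)$. Ruling this out for \emph{every} vertex of $V(P)\cap N(v)$ simultaneously, or else finding a longer exchange than the two you allow (for instance one that replaces a subpath of $P$ by a longer detour through $u$, $v$ and further off-path vertices), is exactly the bookkeeping you have postponed. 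Until that case analysis is carried out, the proof is a plan rather than a proof; if you want to complete it, the complete bipartite decomposition $(K_1,J_1),\ldots,(K_k,J_k)$ and property $(\star)$ are indeed the right tools, but the argument must be written down, including the degenerate cases where the straddling edge collapses onto an endpoint of $P$.
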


Therefore, Property~\ref{prop:longest} holds for this class.

\begin{corollary}\label{cor:longest-bpg}
	If $G$ is a bipartite permutation graph, a longest path is 
	a central path of $G$.
\end{corollary}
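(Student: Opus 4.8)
The plan is to split the argument according to whether $G$ admits a Hamiltonian path. If it does, then any longest path has length $|V(G)| - 1$ and is therefore itself a Hamiltonian path, so its eccentricity is $0$; since $\pe(G) = 0$ in this situation, the longest path is trivially central. All the real work lies in the complementary case, which I treat next.

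So suppose $G$ has no Hamiltonian path. I would first pin down the value of $\pe(G)$. On one hand, Theorem~\ref{thm:dompath} produces a dominating path, giving $\pe(G) \leq 1$. On the other hand, a path of eccentricity $0$ would be a Hamiltonian path, which does not exist; hence $\pe(G) \geq 1$, and therefore $\pe(G) = 1$. It then suffices to prove that every longest path $P$ satisfies $\ecc_G(P) \leq 1$, for then $\ecc_G(P) = 1 = \pe(G)$ and $P$ is central.

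The key step, where I would invoke the lemma of Cerioli et al., converts the edge-covering property into a distance bound. Let $P$ be a longest path and let $w$ be any vertex with $w \notin V(P)$. Since $G$ is connected, $w$ has a neighbor $u$, so $uw \in E(G)$. Applying the lemma to the edge $uw$, the longest path $P$ must contain a vertex of $\{u, w\}$; as $w \notin V(P)$, we conclude $u \in V(P)$, whence $d_G(w, V(P)) = 1$. Since $w$ was arbitrary, $\ecc_G(P) \leq 1$, which completes the argument.

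The proof is short, and the only real obstacle is recognizing how to turn the edge-covering guarantee of the Cerioli lemma into a bound on eccentricity: the trick is to feed the lemma the edge joining an off-path vertex to one of its neighbors, thereby forcing that neighbor onto the path. The remaining bookkeeping—disposing of the Hamiltonian case and identifying $\pe(G)$ with $1$—is routine and uses only Theorem~\ref{thm:dompath} together with the definition of a central path.
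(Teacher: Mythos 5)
Your proof is correct and follows the same route as the paper: dispose of the Hamiltonian case, use Theorem~\ref{thm:dompath} to get $\pe(G)=1$ otherwise, and apply the Cerioli et al.\ lemma to the edge joining an off-path vertex to a neighbor to force that neighbor onto the longest path. The paper leaves this last step implicit, so your write-up is simply a more detailed version of the same argument.
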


\section{Open problems and concluding remarks} \label{sec:remarks}

In this work, we studied the path eccentricity of graphs. First, we considered subclasses of 
perfect graphs. We showed that convex (resp. biconvex) graphs have path eccentricity at most 
two (resp. one). Moreover, we exhibit graphs that attain these bounds,
and design polynomial-time algorithms for finding such paths.
We observe that the adjacency matrix (or a submatrix of it) of a graph in 
these classes exhibit the \textit{consecutive ones property}~\cite{Golumbic04}. 
This property also appears in interval graphs (which have path eccentricity one). 
So, we believe the following question is important to understand the class of graphs 
that have bounded path eccentricity.
%this property (or variations 
%of it) regarding the path eccentricity of a graph, specially its relations with bounded 
%path eccentricity. 

\begin{question}
	How does the consecutive ones property (or variations of it) influence 
	the path eccentricity of a graph?
\end{question}

After that, we studied the path eccentricity of $k$-connected graphs, for $k \geq 1$. 
We showed a tight upper bound for this class. Interestingly, a graph that attains  
this bound has path eccentricity one. We posed the following question: there 
exists a $k$-connected graph, with path eccentricity $\ell$, that attain this bound? 
We give an affirmative answer for the cases: $k \leq 2$ and $\ell \geq 1$; $k = 3$ 
and $\ell = 1, 2, 3$; $k = 4$ and $\ell = 1, 2$; and~$k \geq 5$ and~$\ell = 1$. 
It is open whether such graph exists for $k = 3$ and $\ell \geq 4$; $k = 4$ 
and~$\ell \geq 3$; and~$k \geq 5$ and~$\ell \geq 2$. 
If such graphs do not exist, then the bound we obtained for $k$-connected graphs 
can be improved for $k \geq 3$. We consider interesting to study this 
question for $k = 3$.

\begin{question}
	Do there exists a better bound for $\pe(G)$ when $G$ is 3-connected?
\end{question}

To obtain the result for $k$-connected graphs, we studied the eccentricity of a longest path in a graph. 
It is natural to think that a path that contains the maximum number of vertices would 
also be a good candidate (or approximation) for a central path. For that 
reason, we investigated what structural properties imply 
that, in a graph, a longest path is also central. In what follows, we refer to this property 
as Property~\ref{prop:longest}. We showed that Property~\ref{prop:longest} is satisfied 
by trees. Furthermore, we consider subclasses of planar graphs that contain trees. 
We observe that, under an additional connectivity constraint, Property~\ref{prop:longest} 
is satisfied by cactus, outerplanar and planar graphs. This observation leads to the 
following question. Let $f$ be a function that, given a class of graphs, returns 
the minimum integer $k^*$ such that a $k^*$-connected graph, in this class, satisfies 
Property~\ref{prop:longest}. Note that, a trivial bound for this function follows 
from Ore's Theorem~\cite{BondyM08} with $k^* = (n-1)/2$, where $n$ is the number of 
vertices of the graph, since a graph $G$ with $\delta(G) \geq (n - 1)/2$ contains a Hamiltonian 
path. In particular, we exhibit tight bounds for this function on cactus and outerplanar graphs, 
and a constant upper bound for planar graphs.

\begin{question}
Investigate better upper bounds for $f$, in general, or in some classes of graphs. 
\end{question}

%We consider an interesting and challenging question 
%to investigate better upper bounds for this parameter, in general, or in some classes of graphs. 

\bibliographystyle{plain}
\bibliography{bibliografia}

\end{document}